\title{Coprime invariable generation and minimal-exponent groups}
\author{Eloisa Detomi, Andrea Lucchini and Colva M. Roney-Dougal}
\address{
(1) Eloisa Detomi and Andrea Lucchini, Universit\`a degli Studi di Padova,  Dipartimento di Matematica, Via Trieste 63, 35121 Padova, Italy\newline
(2) Colva M. Roney-Dougal, University of St Andrews, Mathematical Institute, St Andrews, Fife KY16 9SS, Scotland}
\begin{document}

\begin{abstract}
A finite group $G$ is \emph{coprimely-invariably generated} if there exists a set of generators $\{g_1, \ldots , g_u\}$ of $G$ with the property that the orders $|g_1|, \ldots , |g_u|$ are pairwise coprime and that for all $x_1, \ldots, x_u \in G$ the set $\{g_1^{x_1}, \ldots, g_u^{x_u}\}$ generates $G$.

We show that if $G$ is coprimely-invariably generated, then $G$ can be generated with three elements, or two if $G$ is soluble, and that $G$ has zero presentation rank. As a corollary, we show that if $G$ is any finite group such that no proper subgroup has the same exponent as $G$, then $G$ has zero presentation rank. Furthermore, we show that every finite simple group is coprimely-invariably generated.

Along the way, we show that for each finite simple group $S$, and for each partition $\pi_1, \ldots, \pi_u$ of the primes dividing $|S|$, the product of the number $k_{\pi_i}(S)$  of conjugacy classes of $\pi_i$-elements satisfies
$$\prod_{i=1}^u k_{\pi_i}(S) \leq \frac{|S|}{2|\out S|}.$$
\end{abstract}

\maketitle

\section{Introduction}

Following \cite{dix2}
and \cite{ig}, we say that
a subset $\{g_1, \ldots , g_u\}$ of a finite group $G$  \emph{invariably generates} $G$ if
 $\{g_1^{x_1}, \ldots , g_u^{x_u}\}$ generates $G$ for every choice of $x_i \in G$.

\begin{definition}
A finite group $G$ is \emph{coprimely invariably generated} if there exists a set of invariable generators $\{g_1, \ldots , g_u\}$ of $G$ with the property that the orders $|g_1|, \ldots , |g_u|$ are pairwise coprime.
\end{definition}


Our main result says that a coprimely invariably generated group can be generated
with very few elements. Let $d(G)$ denote the minimal number of generators of
$G$.

\begin{thm}\label{3}
Let $G$ be a coprimely invariably generated group. Then $d(G) \leq 3$.
\end{thm}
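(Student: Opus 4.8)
The plan is to reduce to a single chief factor at a time via the theory of crowns, bound the multiplicity of each crown using the coprime hypothesis, and then read off $d(G)$ from the known generation formulas for crown-based powers. First I would record two reductions. Quotients of coprimely invariably generated groups are again coprimely invariably generated: if $\{g_1,\dots,g_u\}$ is a set of coprime invariable generators and $N\trianglelefteq G$, then the images $g_iN$ invariably generate $G/N$ and their orders still divide the pairwise coprime $|g_i|$. Second, for any finite group one has $d(G)=\max_A d(L_{A})$, where $A$ runs over the non-Frattini chief factors of $G$ and $L_{A}=L_{A,k_A}$ is the crown-based power, the monolithic quotient whose socle is the whole $A$-crown. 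Since each $L_A$ is a quotient of $G$, it inherits coprime invariable generation, so it suffices to bound $d(L)$ for a monolithic group $L=L_k$ whose socle is the $k$-fold power of a single chief factor $A$, running an induction on $|G|$ so that the top quotient $H=L/\operatorname{soc}(L)$ already satisfies the desired bound.

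The heart of the argument is the multiplicity bound, which I would first carry out in the abelian case $A=V$, an irreducible $\mathbb{F}_pH$-module, so that $L_k=V^k\rtimes H$. Because $V^k$ is a $p$-group and the generator orders are pairwise coprime, at most one generator has order divisible by $p$; and since a family of $p'$-elements all conjugated into the complement $H$ could never generate $V^k\rtimes H$, in fact exactly one generator, say the image of $g_1$, has order divisible by $p$. Now apply invariable generation to the configuration in which $g_2,\dots,g_u$ are conjugated into $H$ (each $p'$-element is $V^k$-conjugate to an element of $H$ by coprime action). A Gaschütz-type computation shows that the intersection of the resulting subgroup with $V^k$ is exactly the $\mathbb{F}_pH$-submodule generated by the $V^k$-component of the conjugate of $g_1$; for this to be all of $V^k$ the module $V^k$ must be cyclic, forcing $k\le\dim_E V$ with $E=\operatorname{End}_H(V)$. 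Feeding $k\le\dim_E V$ into the generation formula for $V^k\rtimes H$, together with the standard bound on $\dim H^1(H,V)$, shows that building the socle costs at most one generator beyond $d(H)$, so $d(L_k)\le\max\{d(H),2\}$. With $d(H)\le 2$ by induction this yields $d(L_k)\le 2$, and hence $d(G)\le 2$ whenever $G$ is soluble.

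For the general statement I would run the analogous argument for a nonabelian crown, where the socle is $A=S^{n}$ for a nonabelian simple group $S$ and $L_k$ lies in a primitive group of wreath type $S\wr\operatorname{Sym}(n)$. The coprime hypothesis again restricts, for each part of the relevant prime partition, which generator can meet the conjugacy classes of $\pi$-elements in $S$, and invariable generation bounds the crown multiplicity; this is precisely where the class-number estimate $\prod_i k_{\pi_i}(S)\le |S|/(2|\operatorname{Out}S|)$ enters, keeping the number of admissible class-patterns, and hence $k$, under control. Since a nonabelian simple socle already forces a base cost of two generators, the single extra generator needed to build the socle pushes the bound to $d(L_k)\le 3$, and taking the maximum over all crowns gives $d(G)\le 3$. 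I expect the nonabelian case to be the main obstacle: it requires the detailed structure theory of almost simple groups, the correct analogue of the "conjugate into the complement" step for wreath-type primitive groups, and careful bookkeeping of the one extra generator that accounts for the gap between the soluble bound $2$ and the general bound $3$.
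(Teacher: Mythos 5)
Your overall skeleton is the paper's: reduce to a crown-based power $L_t$ that inherits coprime invariable generation, bound the multiplicity $t$ using the coprime hypothesis (via cyclicity of the module in the abelian case, via products of class numbers $k_{\pi_i}(S)$ in the non-abelian case), then read off $d$ from the generation formulas. However, your case bookkeeping is backwards, and this creates a genuine gap. You claim that for an abelian crown, $k\le\dim_E V$ plus ``the standard bound on $\dim H^1(H,V)$'' gives $d(L_k)\le\max\{d(H),2\}$. That conclusion requires $H^1(H,V)=0$, which is Gasch\"utz's theorem and is a \emph{soluble} phenomenon. For insoluble $H$ the best general bound is Aschbacher--Guralnick, $\dim H^1(H,V)<\dim V$, i.e.\ $s_L(V)\le r_L(V)-1$; feeding $t\le r_L(V)$ and $s_L(V)\le r_L(V)-1$ into the Dalla Volta--Lucchini formula gives only $d(L_t)\le \theta+\lceil (t+s_L(V))/r_L(V)\rceil\le 1+2=3$. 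This is not a defect of the method: the paper's Proposition 3.1 exhibits $G=L_2$ with $L=\mathrm{ASL}_2(4)=V\rtimes \mathrm{SL}_2(4)$, which is coprimely invariably generated, insoluble, has abelian crown with $d(L/V)=2$, and satisfies $d(G)=3$ --- a direct counterexample to your claimed inequality $d(L_k)\le\max\{d(H),2\}$. Consequently the bound $3$ in the theorem arises from \emph{abelian} crowns of insoluble groups, a case your third paragraph (which treats only non-abelian crowns) never addresses; as written, your proof of the general statement is incomplete exactly there.

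A secondary issue: in the non-abelian case, the step from the multiplicity bound $t\le |A|/(2n|\operatorname{Out}S|)$ to a bound on $d(L_t)$ cannot be done by the heuristic ``base cost of two generators plus one extra for the socle.'' A bound on $t$ alone says nothing about $d(L_t)$ without a quantitative generation criterion; the paper uses the Dalla Volta--Lucchini criterion ($d(L_t)\le d$ iff $t\le P_{L,A}(d)|A|^d/|C_{\operatorname{Aut}L}(L/A)|$) together with the probabilistic bound $P_{L,A}(d)\ge 1/2$ and a centralizer estimate $|C_{\operatorname{Aut}A}(L/A)|\le n|A||\operatorname{Out}S|$, and this yields $d(L_t)=2$ (not $3$) in the non-abelian case. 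So to repair your argument you should: (i) in the abelian case, drop the $\max\{d(H),2\}$ claim, invoke Aschbacher--Guralnick, and accept the bound $\theta+2\le 3$; (ii) in the non-abelian case, replace the hand-waved ``plus one generator'' step by the probabilistic criterion, which in fact gives $2$.
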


Notice that coprime invariable generation  is the combination of two properties: the
existence of an invariable generating set and the existence of
a set of generators of coprime orders. It is worth noticing than
neither of these properties suffices to obtain an
upper bound on the smallest cardinality of generators of a finite group $G$.
Clearly any finite group $G$
contains an invariable generating set (consider the set of representatives of
each of the conjugacy classes). Moreover for every
$t\in \mathbb N$ there exists a finite (supersoluble) group
$G$ with the property that $d(G)=t$ and $G$ can be generated with
$t$ elements of coprime order (see Proposition \ref{supersoluble}).

\

For general $G$, the bound on $d(G)$ given in Theorem \ref{3}
cannot be improved: there exists a coprimely invariably generated  group $G$
with  $d(G)=3$ (see Proposition \ref{A_5}). However better resuls hold
under additional assumptions. For example, we have a stronger result for
finite soluble groups.

\begin{thm}\label{soluble}
	Let $G$ be a coprimely invariably generated group. If $G$ is soluble,
 then $d(G) \leq 2$.
\end{thm}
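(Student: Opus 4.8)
The plan is to combine the crown-based (Gasch\"utz) description of $d(G)$ for finite soluble groups with the structural constraint imposed by coprime invariable generation: for each prime $p$, \emph{at most one} generator in a coprime invariable generating set has order divisible by $p$. First I would record two reductions. Since the orders $|g_i|$ can only shrink in a quotient, and a lift of any tuple generating $G/N$ comes from a tuple generating $G$, \emph{every quotient of a coprimely invariably generated group is again coprimely invariably generated}; so I may pass freely to quotients and, by induction on $|G|$, assume the statement for all proper quotients. Next, writing $d(G)=\max_A d_A$ with $A$ ranging over the complemented chief factors (up to $G$-isomorphism) and $d_A$ the generator number of the corresponding crown quotient, it suffices to prove $d_A\le 2$ for every $A$. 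For a \emph{trivial} (central) chief factor $A\cong\mathbb{F}_p$ the crown quotient is the elementary abelian $p$-group $A^{\delta_A}$, and a $p$-group admits a coprime generating set only when it is cyclic; hence $\delta_A\le 1$ and $d_A\le 1$. This reduces the problem to nontrivial chief factors.

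So fix a nontrivial chief factor $A$, set $L=G/C_G(A)$, let $\mathbb{F}_q=\mathrm{End}_L(A)$, $m=\dim_{\mathbb{F}_q}A$, and write the crown quotient as $Q=A^{\delta}\rtimes L$ with $\delta=\delta_A$. Since $A\le C_G(A)$, $L$ is a proper quotient, so $d(L)\le 2$ by induction, and the crown formula gives $d_A=\max\bigl(d(L),\,1+\lceil(\delta+h^1)/m\rceil\bigr)$ with $h^1=\dim_{\mathbb{F}_q}H^1(L,A)$. Thus the target is the single inequality $\delta+h^1\le m$. The key observation is that $A^{\delta}$ is a nontrivial normal $p$-subgroup, while every $p'$-element of $Q$ is $A^\delta$-conjugate into a fixed complement $L_0$, because $H^1(\langle h\rangle,A^\delta)=0$ for a $p'$-element $h$. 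Consequently a coprime invariable generating set cannot consist of $p'$-elements only, so exactly one generator $g_1$ has order divisible by $p$; after conjugating the remaining generators into $L_0$, those lie in a complement and contribute nothing to $A^\delta$.

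In the \emph{Hall case} $p\nmid|L|$ one finishes cleanly: here $h^1=0$, the $p$-part of $g_1$ lies in $A^\delta$, and choosing the conjugate of $g_1$ whose $p'$-part also lies in $L_0$ shows that $\langle g_1,\dots,g_u\rangle\cap A^\delta$ equals the $L$-submodule generated by the single vector $u=g_1^{(p)}\in A^\delta$. Invariable generation forces this submodule to be all of $A^\delta$; but writing $u=(u_1,\dots,u_\delta)$ with $u_j\in A$, the submodule it generates is $A\otimes_{\mathbb{F}_q}\langle u_1,\dots,u_\delta\rangle$, which is all of $A^\delta$ only if the $u_j$ are $\mathbb{F}_q$-independent, forcing $\delta\le m$. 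Hence $d_A\le 2$.

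I expect the main obstacle to be the complementary case $p\mid|L|$, where $H^1(L,A)$ can be nonzero and the $p$-part of $g_1$ need no longer lie in $A^\delta$. The mechanism to exploit is again that $H^1(\langle h\rangle,A)=0$ for $p'$-elements, whence each $p'$-generator is conjugate into \emph{every} one of the $|H^1(L,A)|$ conjugacy classes of complements to $A$; so the unique $p$-divisible generator $g_1$ must, by itself, avoid all these complement classes while simultaneously supplying the $\delta$ independent module directions in $A^{\delta}$. Encoding both demands as a single injectivity statement for a linear map $(\mathbb{F}_q^{\delta})^{*}\to A/(1-h_1)A$ modulo the image of $H^1(L,A)$ is the delicate cohomological heart of the argument, and it is precisely here that the combined bound $\delta+h^1\le m$, and hence $d_A\le 2$, must be extracted.
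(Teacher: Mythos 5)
Your overall strategy is the same as the paper's: reduce via crowns to bounding $\delta$ (and $H^1$) against $m=\dim_{\End_L(A)}A$, use pairwise coprimality to ensure at most one generator has order divisible by $p$, conjugate the $p'$-generators into a complement (this is where invariable generation enters, since different conjugators are needed for different generators), and extract linear independence of the coordinates of the remaining generator's $A^\delta$-part. Your Hall case is, in essence, the paper's Theorem~\ref{abcase}. But as submitted the proof has a genuine gap, which you yourself flag: the case $p\mid|L|$ is not done, and it is not a minor loose end --- your completed argument covers only the special situation in which $A^\delta$ is a normal Sylow $p$-subgroup of the crown quotient $Q$.

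The gap closes with two observations, both made in the paper, and the ``delicate cohomological heart'' you anticipate turns out not to exist. First, by Gasch\"utz's theorem (Theorem~\ref{H=0} in the paper), a finite $p$-soluble group acting faithfully and irreducibly on a module over $\F_p$ has vanishing $H^1$; since your $L=G/C_G(A)$ is soluble and acts faithfully and irreducibly on $A$, you get $h^1=0$ unconditionally, not just when $p\nmid|L|$. So the target is only ever $\delta\le m$; there is no combined bound $\delta+h^1\le m$ with $h^1>0$ to extract. Second, your Hall-case argument proves $\delta\le m$ in general once you replace the $p$-part/$p'$-part decomposition of $g_1$ (whose $p$-part indeed need not lie in $A^\delta$ when $p\mid|L|$) by the semidirect-product decomposition $g_1=vh$ with $v\in A^\delta$, $h\in L_0$. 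The conjugation of each $p'$-generator $g_i=v_ih_i$ into $L_0$ uses only that $\langle h_i\rangle$ is a Hall $p'$-subgroup of $A^\delta\langle h_i\rangle$ (note $h_i$ is a $p'$-element because its order divides $|g_i|$), so Schur--Zassenhaus applies whether or not $p$ divides $|L|$. Then $\langle h,g_2,\dots,g_u\rangle$ lies in $L_0$ and maps onto $Q/A^\delta$, hence equals $L_0$; consequently $Q=\langle g_1,\dots,g_u\rangle\le\langle v\rangle^{L_0}L_0$, forcing $\langle v\rangle^{L_0}=A^\delta$, and your linear-independence argument applied to the coordinates of $v$ gives $\delta\le m$ exactly as in your Hall case. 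This uniform argument is precisely the paper's proof of Theorem~\ref{abcase}, which together with Theorem~\ref{H=0} and the crown formula yields $d(G)\le 2$.
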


A motivation for our interest in coprime invariable generation
is the fact that this property is satisfied by finite
groups without proper subgroups of the same exponent (we will call
these groups {\sl{minimal exponent groups}}). Indeed, assume that
$G$ is a minimal exponent group with $e:= \exp(G)=p_1^{n_1}\cdots p_t^{n_t}$. Then for every $i,$ the group
$G$ contains an element $g_i$ of order $p_i^{n_i}$.
Clearly $\exp \langle g_1^{x_1},\dots,g_t^{x_t} \rangle=e$,
for every $x_1,\dots,x_t\in G$. Hence our assumption that no proper subgroup of
$G$ has exponent $e$ implies that
$G=\langle g_1^{x_1},\dots,g_t^{x_t} \rangle,$ so  $G$ is coprimely invariably generated.
In particular, as a corollary of Theorems~\ref{3} and \ref{soluble}, we deduce
a result already proved in \cite{lms} and \cite{PLN}: a finite group $G$ contains
a 3-generated subgroup $H$ with $\exp(G)=\exp(H)$ and if $G$ is soluble
there exists indeed a 2-generated subgroup $H$ of $G$ with $\exp(G)=\exp(H).$

Notice that the example given in Proposition~\ref{A_5} of  a coprimely invariably generated
group $G$ which is not 2-generated is not minimal exponent. Indeed, the property of
being minimal exponent
is much stronger than coprime invariable generation.

Whereas the bound
$d(G)\leq 3$ in Theorem~\ref{3} cannot be improved, we have no example
of a finite minimal exponent group $G$ which cannot be generated by 2 elements
and the following interesting question is open: {\sl{is it true that any finite
group $G$ contains a 2-generated proper subgroup with the same exponent?}}
We think that the study of coprimely invariably generated groups
could help to answer this question.

The minimal exponent property is not inherited by quotients; conversely, all
epimorphic images of a coprimely invariably generated group (and consequently
of a minimal exponent group) are  coprimely invariably generated. From
this point of view, studying  coprimely invariably generated groups
yields information about quotients of  minimal exponent groups.

\

Another result in this paper concerns the presentation rank of coprimely invariably generated groups.
The \emph{presentation rank} $pr(G)$ of a finite group $G$ is an invariant whose definition comes from the study of relation modules (see \cite{dv-l-L_t} for more details). Let $I_G$ denote the augmentation ideal of $\mathbb Z G$, and $d(I_G)$ the minimal number of elements of $I_G$ needed to generate $I_G$ as a $G$-module, then $d(G)=d(I_G)+pr(G)$ \cite{roggen}. It is known that $pr(G)=0$ for many groups $G$, including all soluble groups, all Frobenius groups and all $2$-generated groups.

\begin{thm}\label{zero-rank}
	Let $G$ be a coprimely invariably generated group. Then $G$ has zero presentation rank.
\end{thm}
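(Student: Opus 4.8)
The plan is to separate the soluble and insoluble cases and, in the insoluble case, to pass to a local, crown-by-crown analysis of the presentation rank. The soluble case is immediate: by Theorem~\ref{soluble} a soluble coprimely invariably generated group satisfies $d(G)\le 2$, and every $2$-generated group has zero presentation rank; alternatively one may simply invoke the known fact, recorded in the introduction, that all finite soluble groups have zero presentation rank. Thus I may assume $G$ is insoluble, and by Theorem~\ref{3} that $d(G)\le 3$. If $d(G)\le 2$ there is again nothing to prove, so the real content is the case $d(G)=3$.

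For this I would use the crown-based description of the invariants $d(G)$ and $d(I_G)$. Recall that each of these is computed locally: to each $G$-isomorphism class of non-Frattini chief factors $A$ one associates its crown, a section $G$-isomorphic to $A^{\delta_A}$, and both $d(G)$ and $d(I_G)$ arise as maxima over $A$ of local contributions $d_A(G)$ and $d_A(I_G)$. The presentation rank $pr(G)=d(G)-d(I_G)$ is therefore positive precisely when the maximum defining $d(G)$ is attained only at abelian chief factors $V$ exhibiting the Gasch\"utz shift --- the cohomologically trivial, central-type factors for which the group-theoretic local count exceeds the module-theoretic one by $1$ --- and never at a non-abelian chief factor, where the two local counts agree and no such shift occurs. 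Consequently it suffices to exhibit a non-Frattini non-abelian chief factor $A$ of $G$ with $d_A(G)=d(G)$. Here I would exploit the fact, noted in the introduction, that every epimorphic image of $G$ is again coprimely invariably generated, which lets me pass freely to the primitive monolithic quotient associated with each crown and reduce to analysing these building blocks individually.

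The hard part will be to show that, for an insoluble coprimely invariably generated group with $d(G)=3$, the value $3$ is forced by a non-abelian crown rather than by an abelian one. Concretely, I would revisit the structural analysis underlying Theorem~\ref{3}: the reason such a group may fail to be $2$-generated is the presence of non-abelian composition factors, and I expect the third generator to be accounted for by the crown built from these factors. The main obstacle is therefore to rule out the possibility that some abelian crown $V^{\delta_V}$ with vanishing first cohomology independently pushes $d(G)$ up to $3$ while contributing a positive local presentation rank; controlling the multiplicities $\delta_V$ and the cohomology of these abelian factors against the generation already forced by the simple sections --- where the class-counting inequality $\prod_i k_{\pi_i}(S)\le |S|/\bigl(2\,|{\mathrm{Out}}\,S|\bigr)$ and the coprime invariable generation of the finite simple groups enter --- is where the argument will require the most care.
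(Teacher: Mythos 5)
Your reduction to the insoluble case with $d(G)=3$ is fine, but the structural claim on which the rest of the plan rests is inverted, and this makes the strategy unworkable. By the Cossey--Gruenberg--Kov\'acs formula, $d(I_G)=\max\{h_G(M)\mid M \mbox{ an irreducible } G\mbox{-module}\}$, and for an abelian crown $A$ the local contribution to $d(G)$ is exactly $\max\left(d(L/A),h_G(A)\right)$ (this is Theorem \ref{d-abel}, as used in the proof of Theorem \ref{pr}). So an abelian chief factor contributes the \emph{same} local value to $d(G)$ and to $d(I_G)$: there is no ``Gasch\"utz shift'' by which an abelian crown can push $d(G)$ above $d(I_G)$. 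Positive presentation rank can occur only in the opposite situation, when the maximum defining $d(G)$ is attained solely at \emph{non-abelian} crowns and exceeds $\max_M h_G(M)$; indeed the known examples of groups with $pr(G)>0$ (see \cite{dv-l-non-zero-pr}) are crown-based powers $L_t$ with $\soc(L)$ non-abelian. Consequently, exhibiting a non-abelian chief factor $A$ with $d_A(G)=d(G)$, as you propose, would not prove $pr(G)=0$; it is exactly the configuration one has to exclude.

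This inversion also makes your ``hard part'' both impossible and unnecessary. If the critical crown-based power quotient $L_t$ (with $d(L_t)=d(G)>d(L/\soc(L))$) has non-abelian socle, then for a coprimely invariably generated group Theorem \ref{t}, Lemma \ref{prod} and Lemma \ref{d=2} force $d(G)=2$; hence whenever $d(G)=3$ the critical crown is necessarily \emph{abelian} --- the opposite of what you set out to show. The paper's proof runs the dichotomy in the correct direction: if $\soc(L)$ is abelian, then Theorem \ref{pr} (which is precisely the statement that an abelian critical crown forces $d(I_G)\geq h_G(A)=d(G)$) gives $pr(G)=0$, with no use of coprime invariable generation at all; if $\soc(L)$ is non-abelian, the argument in the proof of Theorem \ref{3} gives $d(G)=2$, and $2$-generated groups have zero presentation rank. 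Your soluble and $d(G)\le 2$ reductions are correct but carry no weight; the case you isolate as the main difficulty is resolved by swapping the roles you assigned to abelian and non-abelian crowns.
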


As an immediate corollary, we get the following.

\begin{thm}
Let $G$ be a finite group such that no proper subgroup has the same exponent as
 $G$. Then $G$ has zero presentation rank.
\end{thm}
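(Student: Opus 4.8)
The plan is straightforward: this final theorem is an immediate corollary of Theorem~\ref{zero-rank} together with the observation, already established in the introduction, that every minimal exponent group is coprimely invariably generated.
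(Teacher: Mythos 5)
Your proposal is correct and matches the paper exactly: the paper states this theorem with the words ``As an immediate corollary, we get the following,'' relying on Theorem~\ref{zero-rank} and the argument in the introduction that a minimal exponent group of exponent $p_1^{n_1}\cdots p_t^{n_t}$ is coprimely invariably generated by elements $g_i$ of order $p_i^{n_i}$ (since any conjugates $g_1^{x_1},\dots,g_t^{x_t}$ generate a subgroup of full exponent, hence the whole group).
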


As a further contribution to the understanding of coprimely invariably generated groups, we present the following theorem.

\begin{thm} \label{thm:simpleCIG}
Let $G$ be a finite simple group. Then $G$ is coprimely invariably generated.
\end{thm}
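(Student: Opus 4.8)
The plan is to recast invariable generation in terms of maximal subgroups and then to manufacture the required coprime-order classes by a counting argument, reserving the classification for the finitely many small cases. Recall the standard criterion: a family of conjugacy classes $C_1,\dots,C_u$ \emph{invariably generates} $G$ (meaning every transversal $g_i\in C_i$ generates $G$) exactly when no maximal subgroup $M$ of $G$ satisfies $C_i\cap M\neq\emptyset$ for all $i$. Indeed, if such an $M$ existed we could conjugate each chosen generator into $M$, and conversely any failure to generate places the chosen elements inside a common maximal subgroup. It therefore suffices to fix a partition $\pi_1,\dots,\pi_u$ of the primes dividing $|G|$ and to exhibit, for each $i$, a class $C_i$ consisting of $\pi_i$-elements such that no maximal subgroup meets all of $C_1,\dots,C_u$; the resulting generators automatically have pairwise coprime orders.

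First I would run the probabilistic method, choosing $C_i$ uniformly among the $k_{\pi_i}(G)$ classes of $\pi_i$-elements, independently, and trying the coarsest nontrivial partition $u=2$ before refining. Since distinct $G$-classes meeting a maximal subgroup $M$ intersect $M$ in disjoint nonempty unions of $M$-classes, the number of $\pi_i$-classes meeting $M$ is at most $k_{\pi_i}(M)$, so $\Pr[C_i\cap M\neq\emptyset]\le k_{\pi_i}(M)/k_{\pi_i}(G)$. Taking a union bound over a set $\mathcal{M}$ of representatives of the conjugacy classes of maximal subgroups, invariable generation is guaranteed once
\[
\sum_{M\in\mathcal{M}}\ \prod_{i=1}^{u}\frac{k_{\pi_i}(M)}{k_{\pi_i}(G)}<1 .
\]
To bound the numerators I would invoke the class-number product inequality announced in the abstract, in the general-group form $\prod_i k_{\pi_i}(M)\le|M|$ (the simple case being the sharper $|M|/(2|\out M|)$ applied through the composition factors of $M$); for the denominators I would choose the partition so that $\prod_i k_{\pi_i}(G)$ has order $|G|/|\out G|$. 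The displayed sum is then at most a constant multiple of $|\out G|\sum_{M\in\mathcal{M}}|G:M|^{-1}$, and the Liebeck--Shalev bounds on the number of conjugacy classes of maximal subgroups and on their indices push this below $1$ for all simple groups of sufficiently large order.

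The remaining groups I would dispatch through the classification. For alternating groups $A_n$ I would take $C_1$ supported on a single long cycle of prime length exceeding $n/2$ and $C_2$ built from cycles on the complementary points, forcing the two element orders to be coprime; Jordan's theorem on primitive groups containing an element with a large prime cycle then eliminates every maximal subgroup as a common host. For the groups of Lie type of small rank and for the sporadic groups, I would construct the two classes explicitly, typically as regular semisimple elements generating distinct maximal tori of coprime order whose only maximal overgroups are the corresponding torus normalisers, and verify directly from the known lists of maximal subgroups and element orders that no maximal subgroup meets both classes.

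The principal obstacle is twofold. First, securing the lower bound $\prod_i k_{\pi_i}(G)\gg|G|/|\out G|$ for a well-chosen partition is the crux of the counting argument, since the inequality in the abstract only furnishes the matching upper bound; one must show that some genuine split of the primes realises this order of magnitude, exploiting the freedom to enlarge $u$ if a two-block partition is too wasteful. Second, the small-rank groups of Lie type and the low-degree alternating and sporadic groups are precisely where coprimality becomes a real constraint: the generic estimate fails, the supply of classes is limited, and one must check by hand that no single maximal subgroup can simultaneously absorb conjugates of both coprime-order generators.
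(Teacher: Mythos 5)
Your opening reduction is correct: classes $C_1,\dots,C_u$ invariably generate $G$ if and only if no maximal subgroup $M$ meets every $C_i$, and the number of $G$-classes of $\pi_i$-elements meeting $M$ is indeed at most $k_{\pi_i}(M)$. The fatal gap is the counting step, and it fails precisely in the ``generic'' range where you rely on it. Your chain of estimates needs a partition with $\prod_i k_{\pi_i}(G)\gg |G|/|\mathrm{Out}(G)|$, and no partition achieves this for any group of Lie type: Theorem \ref{k_pi(S)} is an upper bound that is never remotely sharp. Concretely, for $G=\mathrm{PSL}_2(q)$ with $q$ odd, every nontrivial $p'$-element lies in a (cyclic image of a) maximal torus of order $(q\pm1)/2$, and one computes that the best partition --- $\{p\}$, the primes dividing $q-1$, the remaining primes --- gives $\prod_i k_{\pi_i}(G)\approx 3\cdot\frac{q-1}{4}\cdot\frac{q+1}{4}=\Theta(q^2)$; refining a part supported in a cyclic torus of order $ab$ with $(a,b)=1$ replaces a factor $\approx ab/2$ by $\approx (a/2)(b/2)$ and only shrinks the product. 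Meanwhile $|G|/(2|\mathrm{Out}(G)|)=\Theta(q^3/\log q)$. The same phenomenon persists in all ranks: the $\pi_i$-parts of a maximal torus order multiply across a partition to $|T|\approx q^{r}$, $r$ the rank, so the product of class numbers scales like $q^{O(r)}$ while $|G|\approx q^{\dim G}$ with $\dim G$ quadratic in $r$. Consequently, with your numerator bound $\prod_i k_{\pi_i}(M)\le |M|$, a single large maximal subgroup (a Borel or parabolic) already contributes a term exceeding $1$ to your sum. (Note also that the paper does not use Theorem \ref{k_pi(S)} to prove Theorem \ref{thm:simpleCIG} at all; it uses it in Lemma \ref{prod} to bound the size of crown-based powers.)

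The problem is not just that your bound on the numerators is crude: uniform-over-classes sampling is genuinely the wrong tool, because uniformly random classes are dominated by elements with many fixed points or eigenvalues, which concentrate in point stabilisers and parabolics (in $A_n$, for every set of primes $\pi$, almost every class of $\pi$-elements has a fixed point and hence meets $A_{n-1}$; for a prime $n/2<p<n$, \emph{every} class of $p$-elements does). Invariable generation forces deliberately chosen classes --- fixed-point-free cycles, regular elements of specific tori --- which is a construction, not a union bound. That is exactly how the paper proceeds, with no generic counting step anywhere: it takes the explicit two-class invariable generating sets of Kantor--Lubotzky--Shalev \cite{ig}, checks that in most families the orders are (or can be adjusted to be) coprime, and treats the exceptions separately: $\mathrm{PSp}_{2m}(q)$ with $m$ odd via three classes and \cite[Theorem 1.1]{MSW}; $\Omega_8^+(2)$ and $\mathrm{P\Omega}_8^+(3)$ via four classes and their subgroup structure; $E_7(q)$ via an element of order $(q+1)(q^6-q^3+1)/(2,q-1)$ together with a Zsigmondy prime for $q^{14}-1$ and \cite[Table 6]{GM}; and the sporadic groups via \cite[Table 9]{GM}. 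Your constructive treatments of $A_n$ and of small-rank Lie type groups are in the right spirit and close to what \cite{ig} and the paper actually do; the plan collapses exactly where you lean on the counting argument, namely for all large groups of Lie type, and no enlargement of $u$ or choice of partition can repair it.
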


Finally, the following result on conjugacy classes of finite simple groups
may be of independent interest. If $G$ is a group and $\pi = \{p_1, \ldots,
p_k\}$ a set of primes,  then  $|G|_\pi$ denotes the $\pi$-part of
$|G|$ and an element of $G$ whose order is  $p_1^{\alpha_1} \cdot  \cdots \cdot p_k^{\alpha_k}$, for
some
$\alpha_1, \ldots, \alpha_k \in \mathbb{Z}_{\ge 0}$, is a
$\pi$-element.  Notice that the identity is a $\pi$-element.
We let $k_ \pi(G)$ denote the number of conjugacy classes
of $\pi$-elements of $G$.

\begin{thm}\label{k_pi(S)}
Let $S$ be a finite simple group and let $\pi_1, \dots ,\pi_u$ be a partition of $\pi(S)$.  Then
$$\prod_{i=1}^u k_{\pi_i}(S) \leq \frac{|S|}{2|\out S|}.$$
\end{thm}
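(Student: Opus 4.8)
The plan is to reduce the product bound to per-part estimates, using the multiplicativity of the $\pi$-parts of $|S|$. Since $\pi_1,\dots,\pi_u$ partition $\pi(S)$, the primary decomposition writes each $g\in S$ uniquely as a product of pairwise-commuting $\pi_i$-parts, and in particular $\prod_{i=1}^u |S|_{\pi_i}=|S|$. Thus it suffices to show that the factors $k_{\pi_i}(S)$ are, on the product, smaller than $|S|$ by at least $2|\out S|$. One cannot simply quote the case $u=1$ (the known bound $k(S)\le |S|/(2|\out S|)$ on the number of conjugacy classes): the product $\prod_i k_{\pi_i}(S)$ counts \emph{all} tuples of $\pi_i$-element classes, including those admitting no pairwise-commuting representatives, so it can exceed $k(S)$. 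Indeed $S_3$ already has $k_{\{2\}}(S_3)\,k_{\{3\}}(S_3)=4>3=k(S_3)$. A clean starting estimate, however, is available for a single prime: every class of $p$-elements meets a fixed Sylow $p$-subgroup $P$, and distinct classes meet $P$ in disjoint sets, whence $k_{\{p\}}(S)\le |P|=|S|_p$.

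The key lemma I would isolate is the analogue for an arbitrary set of primes, namely $k_\pi(S)\le |S|_\pi$ for every finite simple group $S$ and every $\pi\subseteq\pi(S)$. Granting it, one obtains at once
$$\prod_{i=1}^u k_{\pi_i}(S)\le\prod_{i=1}^u |S|_{\pi_i}=|S|,$$
and it remains to recover the factor $2|\out S|$. For all but finitely many simple groups the lemma has wide margin, since $k_\pi(S)$ is typically far below $|S|_\pi$; the product of the ratios $|S|_{\pi_i}/k_{\pi_i}(S)$ then comfortably exceeds $2|\out S|$, which for simple groups grows only polynomially in the Lie rank and logarithmically in the field size. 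The finitely many small groups where the margin is thin I would verify directly from the character-table data recorded in the \textsc{Atlas}.

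For the key lemma itself I would argue by the classification. For $S=A_n$ the $\pi$-classes are indexed by cycle types supported on primes in $\pi$, and the bound follows from a partition-counting estimate. For the sporadic groups it is a finite check. The bulk of the work is the groups of Lie type $S$ of rank $r$ over $\mathbb{F}_q$: here I would decompose a $\pi$-element into its semisimple and unipotent parts, bound the number of semisimple $\pi$-classes by the number of $\pi$-element classes in the relevant maximal tori (of order about $q^{r}$), and bound the unipotent contribution by a quantity depending on $r$ but not on $q$; comparing with $|S|_\pi$, which already contains the corresponding torus and defining-characteristic factors, yields $k_\pi(S)\le |S|_\pi$ with room to spare. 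The main obstacle is precisely this uniform control in the tight regime, where some $\pi_i$ contributes only a small power of a single prime to $|S|$ and so forces $k_{\pi_i}(S)$ close to $|S|_{\pi_i}$; there one must show that the resulting deficiency is absorbed by the surplus in the other parts and by the size of $|\out S|$. I expect the defining characteristic, which governs the unipotent classes, together with the diagonal automorphisms, to demand the most careful bookkeeping.
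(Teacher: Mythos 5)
Your reduction has the right overall shape, and your key lemma is precisely the first assertion of the paper's Lemma~\ref{k}: $k_\pi(G)\le |G|_\pi$. But note that this lemma needs no classification at all: the paper proves it for \emph{every} finite group by induction on $|\pi|$. Write a $\pi$-element $g=ab$ with $a$ its $p$-part and $b$ its remaining part, both powers of $g$; up to conjugacy there are at most $k_p(G)\le |G|_p$ choices for $a$ (Sylow's theorems), and given $a$ the element $b$ lies in $C_G(a)$, where $C_G(a)$-conjugate choices of $b$ yield $G$-conjugate products, so induction applied to $C_G(a)$ finishes the count. Your plan to prove this lemma by CFSG, with semisimple/unipotent bookkeeping for Lie type groups, is therefore a large detour around an elementary fact.

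The genuine gap is in how you recover the factor $2|\out S|$. You correctly identify the tight regime as the main obstacle, but your proposed resolution --- that the deficiency in a thin part is ``absorbed by the surplus in the other parts'' --- is a restatement of what must be proved, not an argument: nothing in your setup provides per-part bounds whose product beats $2|\out S|$ uniformly over all partitions (and for $u=1$ your approach would already require the full class-number bound $k(S)\le |S|/(2|\out S|)$, which you rightly say cannot simply be quoted). The paper's missing device is twofold. First, the induction above actually gives the sharper inequality $k_{\{p\}\cup\tilde\pi}(G)\le k_p(G)\cdot|G|_{\tilde\pi}$. Second --- and this is where the classification and the real case analysis enter --- for every simple $S$ outside an explicit finite list ($\alt{n}$ with $n\le 7$, $\lin{2}{q}$ with $q\in\{7,8,11,27\}$, and $\lin{3}{4}$) there is a \emph{single} prime $p$, typically the defining characteristic, with $k_p(S)\le |S|_p/(2|\out S|)$. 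The entire factor $2|\out S|$ is then gained inside whichever part $\pi_j$ contains $p$, via $k_{\pi_j}(S)\le k_p(S)\,|S|_{\pi_j\setminus\{p\}}\le |S|_{\pi_j}/(2|\out S|)$, while every other part needs only the trivial bound $k_{\pi_i}(S)\le |S|_{\pi_i}$; multiplying proves the theorem, with the listed exceptions checked directly. The exceptions also show why some such device is unavoidable: for $S=\lin{2}{7}$ one has $2|\out S|=4$ but $k_2(S)=3>|S|_2/4$, $k_3(S)=2>|S|_3/4$ and $k_7(S)=3>|S|_7/4$, so every single-prime margin is thin and only a combination of margins saves the bound --- exactly the kind of combination your outline asserts but does not supply a mechanism for.
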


This paper is structured as follows. In Section~\ref{sec:background} we present
some background information needed for our proofs.
In Section~\ref{sec:ex} we construct two interesting
examples, exploring the necessity and sufficiency of coprime invariable generation in controlling
minimal generation and exponent.
In Section~\ref{sec:soluble} we prove Theorem~\ref{soluble}, then in Section~\ref{sec:3_zero} we prove Theorems~\ref{3} and \ref{zero-rank}. Finally, in Sections~\ref{sec:simpleCIG}
and \ref{sec:k_pi(S)} we prove Theorems~\ref{thm:simpleCIG} and \ref{k_pi(S)}, respectively.

\section{Background material}\label{sec:background}

In this section we introduce primitive monolithic groups and crown-based powers,
and collect some information about their minimal number of generators, and about their presentation rank.

A group $L$ is \emph{primitive monolithic} if $L$ has a unique minimal normal subgroup $A$, and trivial Frattini subgroup.
We define the \emph{crown-based power} of $L$ of \emph{size $t$} to be
\[L_t=\{(l_1,\dots,l_t)\in L^t\mid l_1A=\dots =l_tA\}=A^t \diag(L^t).\]
In \cite{dv-l-L_t} it was proved that, given a finite group $G$, there exist a primitive monolithic group $L$
and a positive integer $t$ such the crown-based power $L_t$ of size $t$ is an epimorphic image
of $G$ and $d(G)=d(L_t) > d(L/\soc (L))$.

The minimal number  of generators of a crown-based power $L_t$ in the case where $A$ is abelian can be computed with the
following formula:
 \begin{thm}\cite[Proposition 6]{DV-L-M} \label{d-abel}
 Let $L$ be a primitive monolithic group with abelian socle $A$, and let $t$ be as above.
Define
$$r_L(A)=\dim_{\End_{L/A}(A)}A  \quad \quad s_L(A)=\dim_{\End_{L/A}(A)} H^1(L/A,A)$$
 and  set $\theta=0$  if $A$ is a trivial $L/A$-module,  and $\theta = 1$ otherwise.
 Then
$$d(L_t)= \max \left( d(L/A), \theta +\left\lceil \frac{t+s_L(A) }{r_L(A)}\right\rceil \right)
$$
where $\lceil x \rceil$ denotes the smallest integer greater or equal to $x$.
\end{thm}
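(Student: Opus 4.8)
The plan is to realise $L_t$ as an explicit split extension and then translate the generation problem into linear algebra over the field $F := \End_{L/A}(A)$. Write $Q = L/A$ and $V = A$, which is an irreducible $Q$-module since $A$ is an abelian minimal normal subgroup; by Schur's lemma $F$ is a finite field and $r := r_L(A) = \dim_F V$. Because $\Phi(L) = 1$, the minimal normal subgroup $A$ is complemented in $L$, say $L = A \rtimes H_0$ with $H_0 \cong Q$, and a short computation with the defining condition $l_1 A = \dots = l_t A$ shows that $l_i = a_i h$ for a common $h \in H_0$, whence $L_t = A^t \rtimes \diag(H_0^t) \cong V^t \rtimes Q$ with $Q$ acting diagonally; in particular $V^t \cong V^{\oplus t}$ as a $Q$-module. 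Thus I have reduced to counting generators of $V^{\oplus t} \rtimes Q$, and I may assume $d \ge d(Q)$ throughout, since $Q$ is a quotient of $L_t$ and hence $d(L_t) \ge d(Q)$.

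Next I would pin down exactly when a $d$-tuple generates. Fix elements $x_i = (w_i, q_i)$ with $w_i = (v_i^{(1)}, \dots, v_i^{(t)}) \in V^t$ and $q_i \in Q$, and set $K := \langle x_1, \dots, x_d\rangle$. Then $K \cap V^t$ is a $Q$-submodule, and since $V^{\oplus t}$ is isotypic every irreducible quotient is $\cong V$; hence $K = L_t$ if and only if $\langle q_1,\dots,q_d\rangle = Q$ and, for every surjective $Q$-map $\phi\colon V^t \to V$, the tuple $(\phi(w_i), q_i)_i$ generates $V \rtimes Q$. The last condition is governed by cohomology: the complements to $V$ in $V \rtimes Q$ are the graphs of derivations, so $(\phi(w_i), q_i)_i$ fails to generate exactly when $(\phi(w_1), \dots, \phi(w_d))$ lies in the subspace $D := \{(\delta(q_1),\dots,\delta(q_d)) : \delta \in Z^1(Q,V)\} \subseteq V^d$. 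Since a derivation is determined by its values on generators, $\dim_F D = \dim_F Z^1(Q,V) = (r - \dim_F V^Q) + s_L(A)$, which equals $\theta r + s$ with $s := s_L(A)$ and $\theta$ as defined. Writing each $\phi$ via a tuple $(c_1,\dots,c_t)\in F^t$ and setting $\beta_j := (v_1^{(j)}, \dots, v_d^{(j)}) + D \in V^d/D$, the observation $\sum_j c_j\beta_j = 0$ translates the criterion into: $K = L_t$ if and only if $\langle q_i\rangle = Q$ and $\beta_1, \dots, \beta_t$ are $F$-linearly independent in $V^d/D$.

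Finally I would read off the formula. Since $\dim_F(V^d/D) = dr - \theta r - s = r(d-\theta) - s$, an independent family $\beta_1, \dots, \beta_t$ exists, and can be realised by a suitable choice of the $v_i^{(j)}$ because each assignment $(v_i^{(j)})_i \mapsto \beta_j$ is surjective, precisely when $t \le r(d-\theta) - s$; together with $d \ge d(Q)$ this shows that $L_t$ is $d$-generated if and only if $d \ge d(Q)$ and $t \le r(d-\theta)-s$. Taking the least such $d$ and rewriting $r(d-\theta) - s \ge t$ as $d \ge \theta + \lceil (t+s)/r\rceil$ yields $d(L_t) = \max(d(Q), \theta + \lceil (t+s_L(A))/r_L(A)\rceil)$, as claimed. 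I expect the main obstacle to be the rigorous form of the generation criterion in the middle paragraph: one must show that the only obstructions to $K = L_t$ beyond generating $Q$ arise at the irreducible (complement-type) quotients, and then correctly relate failure at each such quotient to membership in $D$ — in other words, a careful deployment of Gaschütz's lemma on complements and of the identification of $H^1(Q,V)$ with conjugacy classes of complements. The dimension bookkeeping separating the trivial-module case ($\theta = 0$, where $V^Q = V$) from the nontrivial case ($\theta = 1$, $V^Q = 0$) is then routine.
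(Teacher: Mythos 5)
The paper does not actually prove Theorem~\ref{d-abel}: it is quoted verbatim from \cite[Proposition 6]{DV-L-M}, so there is no internal proof to compare against. Judged on its own, your argument is correct, and it is essentially the standard proof of that proposition: split $L = A \rtimes H_0$ (valid since $\Phi(L)=1$ and $A$ is abelian minimal normal), identify $L_t \cong A^t \rtimes Q$ with $Q = L/A$ acting diagonally, and convert generation of $V^t \rtimes Q$ into $F$-linear independence of $\beta_1,\dots,\beta_t$ in $V^d/D$ via the correspondence between complements to $V$ in $V \rtimes Q$ and derivations in $Z^1(Q,V)$. The dimension count $\dim_F D = \dim_F Z^1(Q,V) = (r - \dim_F V^Q) + s = \theta r + s$ and the final conversion of $t \le r(d-\theta)-s$ into $d \ge \theta + \lceil (t+s)/r \rceil$ are both right.

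Two steps should be made explicit. First, the backward implication of your generation criterion is asserted with a ``hence'' but needs the modular law: if $\langle q_1,\dots,q_d\rangle = Q$ but $K \neq L_t$, then $W = K \cap V^t$ is a proper submodule (as $KV^t = L_t$), so $W$ lies in a maximal submodule $M$; Dedekind gives $KM \cap V^t = (K \cap V^t)M = M$, so $KM \neq L_t$ and the image of $K$ in $L_t/M \cong V \rtimes Q$, which is exactly $\langle (\phi(w_i), q_i) : i \rangle$ for the induced surjection $\phi \colon V^t \to V^t/M \cong V$, is proper --- contradicting the hypothesis that all such tuples generate. Second, $\dim_F D = \dim_F Z^1(Q,V)$ rests on injectivity of $\delta \mapsto (\delta(q_1),\dots,\delta(q_d))$, which holds precisely because the $q_i$ generate $Q$; you state this, but it is the point where the two halves of the criterion interact. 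A cosmetic remark: since $C_L(A) = A$ in a primitive monolithic group with abelian socle, $A$ is a faithful $Q$-module, so the case $\theta = 0$ occurs only when $Q = 1$ and $L \cong C_p$; your bookkeeping $\dim_F Z^1 = (r - \dim_F V^Q) + s = 0$ is consistent with this, so no case is lost. With these points spelled out, your proof is complete and gives the stated formula for every $t \ge 1$, not just for $t$ arising from the crown-based-power reduction.
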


A result of Aschbacher and Guralnick \cite{A-G} assures us that  $s_L(A) < r_L(A)$:
 \begin{thm} \cite{A-G} \label{thm:A-G}
Let $p$ be a prime and $G$ be a finite group. If $A$ is a faithful irreducible $G$-module over $\F_p$, then
$|H^1(G,A)| < |A|$.
\end{thm}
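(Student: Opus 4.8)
The plan is to work throughout with dimensions over $\F_p$: since $H^1(G,A)$ is an $\F_p$-space, the assertion $|H^1(G,A)|<|A|$ is equivalent to $\dim_{\F_p}H^1(G,A)<\dim_{\F_p}A$. First I would dispose of the degenerate cases. As $A$ is irreducible, $C_A(G)$ is a submodule, hence $0$ or $A$; if $C_A(G)=A$ then $G$ acts trivially and faithfulness forces $G=1$, whence $H^1(G,A)=0$ and there is nothing to prove. So I may assume $A$ is nontrivial and $C_A(G)=0$, so that the coboundaries $B^1(G,A)$ form a copy of $A$ and $|H^1(G,A)|=|Z^1(G,A)|/|A|$. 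Equivalently, writing $M=A\rtimes G$, the number $|H^1(G,A)|$ counts the $A$-conjugacy classes of complements to $A$ in $M$, and the goal becomes $|Z^1(G,A)|<|A|^2$. This reformulation makes clear why both hypotheses matter: faithful irreducibility is precisely what yields $C_A(G)=0$ and forces the complements to be genuinely constrained.

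I would then argue by induction on $|G|$. The elementary ingredient is that if $G=\langle g_1,\dots,g_n\rangle$ then a $1$-cocycle is determined by its values $\delta(g_1),\dots,\delta(g_n)$, whence $|Z^1(G,A)|\le|A|^n$ and $|H^1(G,A)|\le|A|^{\,d(G)-1}$; in particular the inequality holds whenever $d(G)\le 1$, which disposes of the cyclic base case. So assume $d(G)\ge 2$ and choose a minimal normal subgroup $N\ne 1$ of $G$. Feeding the modules $(N,A)$ and $(G/N,C_A(N))$ into the inflation--restriction exact sequence
\[ 0 \to H^1(G/N,C_A(N)) \to H^1(G,A) \to H^1(N,A)^{G/N} \]
yields $\dim_{\F_p}H^1(G,A) \le \dim_{\F_p}H^1(G/N,C_A(N)) + \dim_{\F_p}H^1(N,A)^{G/N}$, and I would bound each summand separately.

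The crux, and the step I expect to be the main obstacle, is that both restriction to $N$ and passage to $C_A(N)=A^N$ destroy the standing hypotheses: $A|_N$ need be neither irreducible nor faithful, and $C_A(N)$ need be neither as a $G/N$-module. I would control this with Clifford theory, writing $A|_N$ as a sum of $G$-conjugate homogeneous components and splitting $H^1(N,A)$ accordingly, and I would separate the cases $C_A(N)=0$ and $C_A(N)\ne 0$; since $N\ne 1$ acts nontrivially by faithfulness of $A$, one always has $C_A(N)\ne A$. To make the induction close, rather than inducting on the bare statement I would induct on a more flexible inequality valid for arbitrary $\F_p G$-modules, bounding $\dim_{\F_p}H^1(G,A)$ in terms of the dimension of the faithful, non-fixed part of $A$, so that the two summands above recombine to something strictly smaller than $\dim_{\F_p}A$. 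The genuinely delicate subcase is $d(G)=2$, where the naive bound $|H^1(G,A)|\le|A|$ is not strict; here I would exploit the structure of the minimal normal subgroup $N$, elementary abelian versus a direct product of nonabelian simple groups, together with the vanishing or smallness of $H^1$ for the relevant simple constituents, to recover the strict inequality.
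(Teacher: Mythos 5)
First, note that the paper contains no proof of this statement: Theorem~\ref{thm:A-G} is quoted directly from Aschbacher and Guralnick \cite{A-G}, so your attempt can only be measured against their argument. Your preliminary reductions are correct: $C_A(G)=0$ unless $G=1$, so $|B^1(G,A)|=|A|$ and $|H^1(G,A)|=|Z^1(G,A)|/|A|$; the bound $|H^1(G,A)|\le |A|^{d(G)-1}$ settles cyclic $G$; and the inflation--restriction sequence gives the stated two-term bound. But from there on you have a programme, not a proof, and the two steps you yourself flag as the crux are genuine gaps. (i) The ``more flexible inequality valid for arbitrary $\F_p G$-modules'' is never formulated, and in the form you describe it cannot hold: for $G$ elementary abelian of rank $n$ acting trivially on $A=\F_p$, the ``faithful, non-fixed part'' of $A$ is zero while $\dim_{\F_p}H^1(G,A)=n$ is unbounded, so any workable inductive hypothesis must also carry generation data such as $d(G)$ through the induction --- and how the two summands $H^1(G/N,C_A(N))$ and $H^1(N,A)^{G/N}$ recombine against $d(G)\ge 2$ is exactly what you leave open. (ii) In the decisive case --- note that faithfulness forces $O_p(G)=1$, and if some minimal normal $N$ is a $p'$-group then $C_A(N)=0$ and $H^1(N,A)=0$, so your exact sequence already gives $H^1(G,A)=0$; hence one is reduced to $N$ a product of nonabelian simple groups of order divisible by $p$ --- bounding $H^1(N,A)^{G/N}$ \emph{is} the theorem for the simple constituents, so appealing to ``vanishing or smallness of $H^1$ for the relevant simple constituents'' is circular unless you import substantial external results on the cohomology of simple groups, which your sketch neither names nor proves.

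Ironically, your first paragraph sets up the machinery of the actual Aschbacher--Guralnick proof and then abandons it. Complements to $A$ in $M=A\rtimes G$ correspond bijectively to elements of $Z^1(G,A)$; above a fixed generating tuple $(g_1,\dots,g_d)$ of $G$, each complement contains exactly one lift $(x_1,\dots,x_d)$ with $x_i\in g_iA$, and every lift that fails to generate $M$ generates a complement (its intersection with $A$ is normal in $M$, hence trivial by irreducibility). So the number of non-generating lifts equals the number of complements, which is $|Z^1(G,A)|=|A|\cdot|H^1(G,A)|$, and for $d=2$ the desired inequality $|H^1(G,A)|<|A|$ becomes precisely the assertion that \emph{some} lift of a generating pair generates $M$. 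Aschbacher and Guralnick prove the theorem essentially along these lines, by an induction on the group and module whose heart is such a generation statement for the semidirect product, entirely avoiding Clifford theory and the cohomology of simple groups. If you want to complete your attempt, pursuing the counting reformulation you already wrote down is a far more promising route than the inflation--restriction induction.
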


For soluble $G$ we will use the following (the proof can be found in \cite{st}):
 \begin{thm}[Gasch\"{u}tz]\label{H=0}
Let $p$ be a prime. If $G$ is a finite $p$-soluble group and $A$ is a faithful irreducible $G$-module over $\F_p$, then $|H^1(G,A)| =0.$
\end{thm}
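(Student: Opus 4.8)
The plan is to establish the equivalent assertion $H^1(G,A)=0$ by factoring out a suitable normal $p'$-subgroup and exploiting the coprimality between $|A|$, a power of $p$, and the order of that subgroup. The engine is the initial segment of the inflation--restriction (Lyndon--Hochschild--Serre) exact sequence attached to a normal subgroup $N\trianglelefteq G$,
\[
0 \longrightarrow H^1(G/N,\, A^N) \longrightarrow H^1(G,A) \longrightarrow H^1(N,A),
\]
into which I will feed two independent vanishing statements, so that the middle term is squeezed to zero.

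First I would eliminate $p$-elements from the picture. Since $A$ is a nonzero $\F_p$-module, the normal $p$-subgroup $O_p(G)$ fixes a nonzero vector, so $C_A(O_p(G))$ is a nonzero submodule; irreducibility forces $C_A(O_p(G))=A$, whence $O_p(G)$ acts trivially and faithfulness gives $O_p(G)=1$. The case $G=1$ is immediate (then $A=\F_p$ and $H^1=0$), so assume $G\neq 1$. This is the one place the hypothesis is used: a minimal normal subgroup of a $p$-soluble group is either an elementary abelian $p$-group, which is impossible as it would lie in $O_p(G)=1$, or a $p'$-group; hence $N:=O_{p'}(G)\neq 1$.

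Next I would compute the two outer terms. Because $N\trianglelefteq G$, the fixed-point space $A^N=C_A(N)$ is a $G$-submodule; were it nonzero, irreducibility would give $C_A(N)=A$, so the nontrivial group $N$ would act trivially on $A$, contradicting faithfulness. Thus $A^N=0$ and the first term $H^1(G/N,A^N)$ vanishes. For the third term, $H^1(N,A)$ is annihilated by $|N|$, a $p'$-number, and is a $p$-group since $A$ is an $\F_p$-module, hence it too is trivial. The exact sequence then traps $H^1(G,A)$ between two trivial groups, giving $H^1(G,A)=0$.

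I do not expect a serious obstacle: the argument is essentially a coprime-cohomology squeeze. The single point demanding care is the reduction, namely verifying that faithfulness simultaneously kills $O_p(G)$ and the fixed points $A^N$, which is precisely what makes the inflation--restriction sequence collapse. It is worth noting that $p$-solubility enters only to guarantee $O_{p'}(G)\neq 1$; without it (for instance when $G$ is nonabelian simple with $O_p(G)=O_{p'}(G)=1$) the reduction fails, consistent with the fact that for general $G$ one can assert no more than $|H^1(G,A)|<|A|$, as in Theorem~\ref{thm:A-G}.
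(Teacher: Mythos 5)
Your proof is correct, but a comparison with ``the paper's proof'' is moot here: the paper does not prove this theorem at all, quoting it as Gasch\"utz's result and deferring the proof to Stammbach \cite{st}. Your argument is the classical self-contained proof, and every step checks out: $O_p(G)=1$ follows because a normal $p$-subgroup has nonzero fixed points on the nonzero $\F_p$-space $A$ (orbit-counting), these fixed points form a $G$-submodule, and irreducibility plus faithfulness do the rest; $p$-solubility is used exactly once, to conclude from $O_p(G)=1$ that a minimal normal subgroup (a product of isomorphic simple groups, each $C_p$ or a $p'$-group) must be a $p'$-group, so $N=O_{p'}(G)\neq 1$; then $A^N$ is a $G$-submodule killed by faithfulness, and the five-term inflation--restriction sequence traps $H^1(G,A)$ between $H^1(G/N,A^N)=H^1(G/N,0)=0$ and $H^1(N,A)$, which is annihilated both by $|N|$ and by $p$ (cocycles and coboundaries are $\F_p$-spaces), hence vanishes by coprimality. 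Two small remarks: you rightly read the paper's statement $|H^1(G,A)|=0$ as the intended $H^1(G,A)=0$ (the cardinality should of course be $1$); and your closing observation is apt --- the reduction genuinely needs $O_{p'}(G)\neq 1$, it fails for nonabelian simple $G$, and there the bound $|H^1(G,A)|<|A|$ of Theorem~\ref{thm:A-G} is what survives. What your write-up buys over the paper's citation is a short, elementary, self-contained argument using only the Lyndon--Hochschild--Serre sequence and coprime vanishing; what the reference \cite{st} buys is the broader context, since Stammbach's paper places the vanishing statement within a cohomological characterisation of $p$-soluble groups, which the present paper does not need.
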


When $A$ is non-abelian,  $d(L_t)$ can be evaluated using the following,
where $P_{L, A}(k)$ denotes the conditional probability that $k$ randomly chosen elements of $L$ generate $L$, given that they project onto generators for $L/A$.

\begin{thm}\cite[Theorem 2.7]{dv-l-L_t}\label{f_L}
Let $L$ be a monolithic primitive group with  non-abelian socle $A$, and let $d \geq d(L)$.
Then $d(L_t) \leq d $ if and only if
\[t \leq \frac{P_{L,A}(d) |A|^d}{|C_{\aut L}(L/A)|}.\]
\end{thm}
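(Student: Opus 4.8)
The plan is to parametrise the generating $d$-tuples of the crown-based power $L_t$ by the generating $d$-tuples of $L$, and to reduce the statement to a counting problem governed by the action of $N := C_{\aut L}(L/A)$. Throughout I write $Q = L/A$ and use repeatedly that, since $A$ is a non-abelian minimal normal subgroup, it is a direct product of non-abelian simple groups, so that $Z(A) = 1$ and $C_L(A) = 1$, and that the only normal subgroups of $L$ contained in $A$ are $1$ and $A$.

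First I would record the two facts that drive the argument. The group $N$ acts on $L$ by automorphisms, hence diagonally on the set $X$ of generating $d$-tuples of $L$, and this action is free, because an automorphism fixing a generating set is the identity. Moreover, since $N$ fixes every coset of $A$, it acts trivially on $Q$, so the reduction map $X \to Q^d$ is constant on $N$-orbits and $N$ preserves the fibre $X_{\underline x}$ of tuples reducing to a fixed generating tuple $\underline x$ of $Q$. Consequently every $N$-orbit in $X_{\underline x}$ has size $|N|$, and the number of orbits is $|X_{\underline x}|/|N|$.

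Next I would establish the combinatorial characterisation of generation. A tuple $\underline y \in (L_t)^d$ has $t$ coordinate fibres $\underline y^{(1)}, \dots, \underline y^{(t)} \in L^d$, and the defining condition of $L_t$ forces all of these to reduce to one common $\underline x \in Q^d$. I claim $\langle \underline y\rangle = L_t$ if and only if each fibre generates $L$ (so $\underline x$ generates $Q$) and the fibres lie in $t$ pairwise distinct $N$-orbits. For the orbit description I would apply Goursat's lemma to the two-coordinate projection $W_{ij} := \pi_{ij}(\langle \underline y \rangle) \le L_2$: if $W_{ij} \ne L_2$ then, because the only normal subgroups of $L$ inside $A$ are $1$ and $A$, both projections are injective on $W_{ij}$, so $W_{ij}$ is the graph of an automorphism $\theta$ of $L$; and $W_{ij} \le L_2$ forces $\theta$ to fix every coset of $A$, that is $\theta \in N$ and $\underline y^{(j)} = \theta \cdot \underline y^{(i)}$. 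Thus $W_{ij} = L_2$ exactly when the two fibres are $N$-inequivalent. To pass from pairwise to global generation I would prove, by induction on $t$, that a subgroup $W \le L_t$ surjecting onto every single coordinate and every pair of coordinates equals $L_t$; the inductive step projects onto the first $t-1$ coordinates, and the sole obstruction (nonvanishing of the ``new'' kernel) is excluded precisely by $Z(A)=1$, which prevents the relevant diagonal gluing.

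Finally I would assemble the count. By the previous step, $L_t$ is $d$-generated using a fixed admissible base $\underline x$ if and only if $X_{\underline x}$ contains $t$ tuples in distinct $N$-orbits, i.e. if and only if $t \le |X_{\underline x}|/|N|$; and since all fibres share one base, $d(L_t)\le d$ if and only if $t \le \max_{\underline x} |X_{\underline x}|/|N|$. Here the hypothesis $d \ge d(L)$ lets me invoke Gaschütz's lifting lemma: every generating $d$-tuple of $Q$ lifts, and the number of generating lifts is the same for each, namely the number of generating $d$-tuples of $L$ divided by that of $Q$. This common value is exactly $P_{L,A}(d)\,|A|^d$, so the maximum equals the average and
$$d(L_t) \le d \iff t \le \frac{P_{L,A}(d)\,|A|^d}{|C_{\aut L}(L/A)|}.$$
I expect the main obstacle to be the global-from-pairwise step: it is the one place where non-abelianness is essential (the analogue fails for abelian $A$, where pairwise independence is strictly weaker than global independence), and it requires careful Goursat bookkeeping together with $Z(A)=1$ to rule out diagonal identifications between coordinates.
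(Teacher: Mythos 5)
This statement is quoted from \cite{dv-l-L_t} without proof in the present paper, so your proposal can only be measured against the original Dalla Volta--Lucchini argument, which it essentially reconstructs: the characterisation that a $d$-tuple generates $L_t$ if and only if its $t$ coordinate fibres each generate $L$ and are pairwise inequivalent under the free action of $N = C_{\aut L}(L/A)$ (Goursat for pairs, then induction using $Z(A)=1$ so that normal subgroups of $L_{t-1}$ inside $A^{t-1}$ are products of coordinate copies), combined with the counting form of Gasch\"utz's lemma --- valid exactly because $d \ge d(L)$ --- identifying the common fibre size as $P_{L,A}(d)\,|A|^d$, is the standard and original route. Your sketch is correct apart from one reversed parenthetical: in the inductive step the obstruction is the \emph{vanishing} of the new kernel $K$ (the graph case, which your pairwise conditions and $Z(A)=1$ rule out), since $K \neq 1$ forces $\pi_t(K) = A$ and immediately yields $W = L_t$.
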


Bounds on $P_{L,A}(d)$ were studied in \cite{PLN} and \cite{nina},   achieving  the strong result:
\begin{thm}\cite{PLN} \label{1/2}
Let $L$ be a primitive monolithic group with socle $A$. Then $P_{L,A}(d)\ge 1/2.$
\end{thm}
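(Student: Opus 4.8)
The plan is to treat separately the cases where the socle $A$ is abelian and where it is non-abelian, since these need quite different tools: the abelian case is governed exactly by a first-cohomology computation, while the non-abelian case forces one into the quantitative theory of generation of simple groups. Write $K=L/A$. As $L$ is primitive monolithic, $A$ is its unique minimal normal subgroup, so $C_L(A)=A$ if $A$ is abelian and $C_L(A)=1$ otherwise. A $d$-tuple $(g_1,\dots,g_d)$ whose image in $K$ generates $K$ automatically satisfies $\langle g_1,\dots,g_d\rangle A=L$, and therefore generates $L$ unless it lies in some maximal subgroup $M<L$ with $A\not\le M$ (equivalently $MA=L$). Fixing coset representatives $x_i$ of the prescribed images and writing $g_i=a_ix_i$ with $a_i\in A$ uniform and independent, one checks, using $MA=L$, that for each $i$ the probability that $g_i\in M$ equals $|A\cap M|/|A|=[A:A\cap M]^{-1}$. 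A union bound then gives
\[
1-P_{L,A}(d)\ \le\ \sum_{\substack{M\ \mathrm{maximal\ in}\ L\\ A\not\le M}}[A:A\cap M]^{-d},
\]
and the whole problem reduces to estimating the right-hand side.

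When $A$ is abelian it is a faithful irreducible $\F_p[K]$-module, and by irreducibility $\langle g_1,\dots,g_d\rangle\cap A$ is a normal subgroup of $L$ contained in $A$, hence $1$ or $A$; so the only subgroups $M$ above are the complements of $A$. If the extension $A\to L\to K$ does not split there are none and $P_{L,A}(d)=1$. If it splits, I would identify the non-generating tuples with cocycles: fixing a section $s\colon K\to L$, each complement is $H^\delta=\{\delta(k)s(k):k\in K\}$ for a unique $\delta\in Z^1(K,A)$, and the tuple $g_i=a_is(\bar g_i)$ lies in $H^\delta$ exactly when $a_i=\delta(\bar g_i)$ for all $i$. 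Since a cocycle is determined by its values on the generators $\bar g_i$, the assignment $\delta\mapsto(\delta(\bar g_1),\dots,\delta(\bar g_d))$ is injective and each bad tuple lies in a unique complement, so the union bound is in fact an equality:
\[
1-P_{L,A}(d)=\frac{|Z^1(K,A)|}{|A|^d}=\frac{|H^1(K,A)|}{|A|^{d-1}},
\]
using $|Z^1|=|H^1|\,|B^1|$ and $|B^1|=[A:C_A(K)]=|A|$, the fixed points $C_A(K)$ being trivial by irreducibility. Now Theorem~\ref{thm:A-G} yields $|H^1(K,A)|<|A|$; as both are powers of $p$ this forces $|H^1(K,A)|\le|A|/p$, whence $1-P_{L,A}(d)\le 1/\bigl(p\,|A|^{d-2}\bigr)\le 1/p\le 1/2$ for all $d\ge 2$, the only exception $d=1$ occurring for $L=C_p$, where $P_{L,A}(1)=1-1/p\ge 1/2$ directly.

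The non-abelian case is where the real difficulty lies, and I expect it to be the main obstacle. Here $A=T^n$ with $T$ non-abelian simple and $C_L(A)=1$, and $M\cap A$ is no longer normal in $L$: it is only an $M$-invariant subgroup of $T^n$, with $M$ permuting the $n$ factors transitively. I would invoke the Aschbacher--Scott description of the maximal subgroups supplementing the socle (product type, diagonal type, and for almost simple $L$ the subgroups $N_L(R)$ with $R<T$ a suitable maximal subgroup, together with complements), bounding the size of each $L$-class of such $M$ by $[A:A\cap M]$ and substituting into the displayed bound to reduce it to a sum $\sum_R[A:A\cap M]^{-(d-1)}$ over the relevant subgroups $R=M\cap A$, including $R=1$.

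The crux is to show this sum is at most $1/2$ for every $d\ge2$, and this cannot be achieved by the crude union bound alone: already for $d=2$ and small $T$, the contributions $[T:R]^{-1}$ from maximal subgroups $R<T$ are not individually small. The genuine input is the quantitative theory of simple groups---estimates of Liebeck and Shalev on $\sum_{M\ \mathrm{max}\ T}[T:M]^{-s}$ (equivalently, bounds on the number of maximal subgroups of each index) and Dixon-type results that two random elements generate $T$ with probability tending to $1$---which control the main term, combined with a passage from the union bound to an exact inclusion--exclusion (a M\"obius computation over the subgroup lattice) to avoid overcounting. The finitely many small groups $T$, and the diagonal and product contributions when $n>1$, then have to be dealt with directly, and it is the sharpening of the resulting absolute constant down to exactly $1/2$ that is the delicate, computation-heavy endpoint of the argument.
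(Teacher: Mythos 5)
Your abelian-socle argument is correct and essentially complete: identifying the non-generating lifts of a generating tuple of $L/A$ with cocycles, getting the exact count $1-P_{L,A}(d)=|H^1(L/A,A)|/|A|^{d-1}$ (the map $\delta\mapsto(\delta(\bar g_1),\dots,\delta(\bar g_d))$ is indeed injective on $Z^1$, and $|B^1|=|A|$ by faithfulness and irreducibility), and then invoking Theorem~\ref{thm:A-G} together with the observation that both $|H^1(L/A,A)|$ and $|A|$ are $p$-powers, so that $|H^1(L/A,A)|\le |A|/p$, gives $P_{L,A}(d)\ge 1-1/p\ge 1/2$ for all $d\ge 2$. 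Your reduction in the non-abelian case is also sound as far as it goes: a non-normal maximal $M$ with $MA=L$ has exactly $[L:M]=[A:A\cap M]$ conjugates, so the union bound over conjugacy classes of maximal supplements correctly becomes $1-P_{L,A}(d)\le\sum_{M}[A:A\cap M]^{-(d-1)}$.

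The genuine gap is everything after that display, and you have in effect conceded it yourself. For the non-abelian case you do not classify which supplements actually occur, produce no bound on the number of classes of maximal supplements with given index $[A:A\cap M]$, and explicitly state that the union bound is insufficient and that Liebeck--Shalev-type estimates plus some inclusion--exclusion ``would'' close the argument; none of these estimates is carried out, and no treatment of the diagonal-type and product-type supplements for $A=T^n$ with $n>1$, nor of the finitely many small simple groups $T$, is given. That missing part is not a routine verification: it is the actual content of the theorem. Note that the present paper does not prove this statement at all --- it is quoted from \cite{PLN} --- and in that source the non-abelian case occupies essentially the entire paper, using CFSG-based reductions to almost simple groups, explicit bounds on sums of the form $\sum_R [T:R]^{-s}$ over maximal subgroups $R$ of a simple group $T$, and careful handling of small cases to push the constant down to $1/2$. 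As it stands, your attempt proves the theorem only when the socle is abelian (where it in fact yields the stronger bound $1-1/p$), and sketches a plausible but unexecuted programme for the case that carries all the difficulty.
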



We finish this introductory section with a result on presentation rank.

\begin{thm}\label{pr}
Let $G$ be a finite group and let $L_t$ be a crown based power of a primitive monolithic group
$L$  such that  $L_t$ is a homomorphic image of $G$ and
 $d(G)=d(L_t)>d(L/\soc(L))$.
 If $\soc(L)$ is abelian, then $pr(G)=0$.
\end{thm}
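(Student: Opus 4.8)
The plan is to reduce the statement to the single crown-based power $L_t$ and then to compute $d(I_{L_t})$ exactly, exploiting the split structure forced by the abelian socle. First I would record that $pr$ is monotone along quotients that preserve $d$: the natural surjection $\mathbb{Z}G\twoheadrightarrow\mathbb{Z}L_t$ carries the augmentation ideal $I_G$ onto $I_{L_t}$, so a $\mathbb{Z}G$-module generating set of $I_G$ maps to one of $I_{L_t}$ and hence $d(I_{L_t})\le d(I_G)$. Combining this with the hypothesis $d(G)=d(L_t)$ gives $pr(G)=d(G)-d(I_G)\le d(L_t)-d(I_{L_t})=pr(L_t)$. Since $pr(G)\ge 0$ always, it suffices to prove $pr(L_t)=0$, that is $d(I_{L_t})=d(L_t)$; and as $d(I_{L_t})\le d(L_t)$ for free, I only need the reverse inequality $d(I_{L_t})\ge d(L_t)$.

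Next I would use the affine structure of $L$. Because $L$ is primitive monolithic with abelian socle $A$, primitivity together with $\Phi(L)=1$ forces $C_L(A)=A$, so that $A$ is a faithful irreducible module for $H:=L/A$ and $A$ is complemented; thus $L=A\rtimes H$ and $L_t=A^t\rtimes H$ with $H$ acting diagonally. If $A$ were a trivial $H$-module then $L=C_p$ and $L_t$ would be elementary abelian, hence soluble with $pr(L_t)=0$; so I may assume $\theta=1$ in Theorem~\ref{d-abel}. The hypothesis $d(L_t)>d(L/\soc L)=d(L/A)$ then forces the maximum in that theorem to be attained by the second term, giving $d(L_t)=1+\lceil (t+s_L(A))/r_L(A)\rceil$.

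Then I would compute the contribution of the module $A$ to $d(I_{L_t})$. Put $E=\End_H(A)$, $r=r_L(A)=\dim_E A$, $s=s_L(A)=\dim_E H^1(H,A)$ and $p=\operatorname{char}A$. Reducing modulo $p$ and using $I_{L_t}/pI_{L_t}\cong I_{\F_pL_t}$, the standard formula $d(M)=\max_W\lceil m_W(M)/\dim_{\End W}W\rceil$ for the minimal number of generators of a module over $\F_pL_t$ yields $d(I_{L_t})\ge\lceil m_A/r\rceil$, where $m_A=\dim_E\operatorname{Hom}_{L_t}(I_{L_t},A)$ is the multiplicity of $A$ in the head of $I_{\F_pL_t}$. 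Applying $\operatorname{Hom}_{L_t}(-,A)$ to the short exact sequence $0\to I_{\F_pL_t}\to\F_pL_t\to\F_p\to 0$, and using $A^{L_t}=0$ together with the projectivity of $\F_pL_t$, gives $m_A=\dim_E A+\dim_E H^1(L_t,A)=r+\dim_E H^1(L_t,A)$. Finally, the Hochschild--Serre sequence of $1\to A^t\to L_t\to H\to 1$, together with the vanishing of the transgression $H^1(A^t,A)^H\to H^2(H,A)$ forced by the splitting of $L_t$, gives $\dim_E H^1(L_t,A)=\dim_E H^1(H,A)+\dim_E\operatorname{Hom}_H(A^t,A)=s+t$. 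Hence $m_A=r+s+t$ and $d(I_{L_t})\ge\lceil (r+s+t)/r\rceil=1+\lceil (t+s)/r\rceil=d(L_t)$, which forces equality and $pr(L_t)=0$.

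The crux of the argument, and the step I expect to be the main obstacle, is the exact evaluation of $H^1(L_t,A)$: the contribution of $A$ matches $d(L_t)$ \emph{on the nose} only because the transgression into $H^2(H,A)$ vanishes, so that no cohomological defect shrinks the count. This vanishing is precisely what the split (affine) structure of $L_t$ supplies, and is exactly the feature that would be unavailable for non-abelian $\soc(L)$ — which is why the hypothesis that $\soc(L)$ is abelian is essential. The remaining points to verify carefully are the clean reduction modulo $p$ to the single module $A$, and the fact that $A$ is the chief factor governing the relevant crown, both of which rest on the hypothesis $d(L_t)>d(L/\soc L)$.
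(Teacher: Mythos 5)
Your proof is correct, but it takes a genuinely different route from the paper's. The paper never reduces to $L_t$: it works with $G$ itself and cites three results --- that $\delta_G(A)$, the largest $k$ with $L_k$ a quotient of $G$, counts the complemented chief factors $G$-isomorphic to $A$ (\cite[Proposition 9]{crowns}); the formula $s_G(A)=\delta_G(A)+s_L(A)$ from \cite{andrea2}; and the Cossey--Gruenberg--Kov\'acs equality $d(I_G)=\max_M h_G(M)$ over irreducible $G$-modules $M$ (\cite[Theorem 3]{pr}) --- to conclude that $d(I_G)\ge h_G(A)=d(L_{\delta_G(A)})=d(G)$, whence $pr(G)=0$. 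You instead push the problem down to $L_t$ via the surjection $\mathbb{Z}G\twoheadrightarrow\mathbb{Z}L_t$ (a legitimate reduction, since $d(I_{L_t})\le d(I_G)$ and $d(G)=d(L_t)$ by hypothesis), and then re-derive by hand exactly the special cases of the cited results that are needed: your Nakayama/Wedderburn head-multiplicity bound combined with the long exact sequence for $0\to I_{\F_pL_t}\to \F_pL_t\to\F_p\to 0$ is precisely the lower-bound half of the Cossey--Gruenberg--Kov\'acs theorem for the single module $A$, and your five-term-sequence computation $\dim_E H^1(L_t,A)=t+s_L(A)$, with transgression vanishing because $L_t=A^t\rtimes H$ splits and $A^t$ acts trivially on $A$, is the $G=L_t$ instance of the formula quoted from \cite{andrea2} (where $\delta_{L_t}(A)=t$). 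All the individual steps check out, including the degenerate case where $A$ is a trivial module (forcing $L\cong C_p$ by faithfulness), the identification $C_L(A)=A$ making $A$ a faithful irreducible $H$-module, and the arithmetic $\lceil (r+s+t)/r\rceil=1+\lceil (s+t)/r\rceil=d(L_t)$. What your version buys is self-containedness --- only standard homological algebra, with the split structure of $L_t$ doing visible work in killing the transgression --- at the cost of length; what the paper's version buys is brevity and slightly more information, since invoking the full Cossey--Gruenberg--Kov\'acs equality identifies $d(I_G)=d(G)$ directly at the level of $G$ rather than deducing $pr(G)=0$ by monotonicity from a quotient.
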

\begin{proof}
For an irreducible $G$-module $M$,
 we set
$$r_G(M)=\dim_{\End_G(M)}M  \quad \quad s_G(M)=\dim_{\End_G(M)} H^1(G,M)$$
 and define
$$h_{G}(M)= \theta+ \left\lceil    \frac{s_G(M)}{r_G(M)}\right\rceil
$$
where $\theta=0$ if $M$ is a trivial and $\theta = 1$ otherwise.

Assume that 
$A=\soc(L)$ is abelian. Let  $\delta_G(A)$ be the largest integer $k$ such that the
crown based power $L_{k}$
 is a homomorphic image of $G$, and note that
$$d(L_{\delta_G(A)})=d(L_t)=d(G).$$
By \cite[Proposition 9]{crowns}, the integer
$\delta_G(A)$ is the number of complemented chief factors
$G$-isomorphic to $A$ in any chief series of $G$. Since
$$r_G(A)=r_{L}(A)$$
and
$$s_G(A)=\dim_{\End_G(A)} H^1(G,A)= \delta_G(A)+ \dim_{\End_{L/A}(A)} H^1(L/A,A) $$
 (see e.g. [1.2] in \cite{andrea2}),
it follows that
\begin{equation*}
h_G(A)= 
\theta+ \left\lceil    \frac{\delta_G(A)+s_{L}(A)}{r_G(A)}\right\rceil.
\end{equation*}
By Theorem \ref{d-abel}
we conclude that
\[d(G)=d(L_{\delta_G(A)})=h_G(A).\]
 By a result of Cossey, Gruenberg and Kov\'acs \cite[Theorem 3]{pr}
$$d(I_G)= \max \{ h_G(M)\mid  M  \mbox{ an irreducible }  G\textrm{-module} \}$$
 thus, in particular, $d(I_G) \geq h_G(A)=d(G)$.
  Since $d(I_G)\leq d(G)$, we have an equality, hence $pr(G)=0$.
  \end{proof}


\section{Examples}\label{sec:ex}

In this section, we start by constructing a group that shows that the bound given in Theorem \ref{3} cannot be improved.
The same group provides an example
  of a coprimely invariably generated group which is not minimal-exponent.
 We then construct a family of examples which demonstrate that the property of coprime generation alone is not enough to
constrain the minimal number of generators of a finite group.

\begin{prop}\label{A_5}
Let $L = \mathrm{ASL}_{2}(4) = \F_4^2 \rtimes \slin{2}{4} = V \rtimes \slin{2}{4}$,
and let $G$ be the crown-based power $L_2$ of $L$.
Then $G$ is coprimely invariably generated and $d(G)=3$.
 Moreover,  $G$ has a proper subgroup with the same exponent.
\end{prop}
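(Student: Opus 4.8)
The plan is to make the structure of $G$ explicit and then treat the three assertions in turn. Since $L = V \rtimes \slin{2}{4}$ is a split extension with $V = \F_4^2$ the natural module, the crown-based power $G = L_2$ is itself a split extension $G = W \rtimes H$, where $W = V \oplus V$ and $H \cong \slin{2}{4} \cong A_5$ acts diagonally, so that $W$ is a direct sum of two copies of the natural module. In particular $\soc(G) = W$ has order $2^8$ and $G/W \cong A_5$. To compute $d(G)$ I apply Theorem~\ref{d-abel} with $A = V$ and $t = 2$. Here $\End_H(V) = \F_4$ and $V$ is $2$-dimensional over $\F_4$, so $r_L(A) = 2$; the module $V$ is nontrivial, so $\theta = 1$; and $d(L/A) = d(A_5) = 2$. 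The key input is $s_L(A) = \dim_{\F_4} H^1(\slin{2}{4}, V) = 1$, a nonzero value compatible with Theorem~\ref{thm:A-G} (which forces $\dim_{\F_4} H^1 \le 1$ since $|H^1| < |V| = 16$). Substituting gives $d(L_2) = \max(2,\, 1 + \lceil (2+1)/2 \rceil) = 3$.

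For coprime invariable generation I choose $h_3, h_5 \in H$ of orders $3$ and $5$, and an element $w = (v_1, v_2) \in W$ with $v_1, v_2 \in V \setminus \{0\}$ and $v_2 \notin \F_4 v_1$; then $w$ has order $2$, so the orders $2, 3, 5$ are pairwise coprime. I claim $\{w, h_3, h_5\}$ invariably generates $G$. Fixing conjugates, set $K = \langle w^{x_1}, h_3^{x_2}, h_5^{x_3} \rangle$. Modulo $W$, the images of $h_3^{x_2}$ and $h_5^{x_3}$ are conjugates of elements of order $3$ and $5$ in $A_5$; since the only subgroup of $A_5$ of order divisible by $15$ is $A_5$ itself, these generate $G/W \cong A_5$, so $KW = G$. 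Then $K \cap W$ is normalized by $K$ and centralized by the abelian $W$, hence is an $H$-submodule of $W$ containing $w^{x_1}$. Because $W$ is abelian and $G = WH$, writing $x_1 = w_0 h_0$ gives $w^{x_1} = w^{h_0}$, whose $H$-span equals the $H$-span of $w$. The proper nonzero $H$-submodules of $W = V \oplus V$ are the five copies of $V$ indexed by $\mathbb{P}^1(\F_4)$ (using $\End_H(V) = \F_4$), and my choice of $w$ lies in none of them, so it generates $W$. Hence $K \cap W = W$ and $K = G$.

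For the exponent, a direct order computation gives $\exp(G) = 60$. An element projecting to an element of order $3$ or $5$ in $A_5$ again has order $3$ or $5$, since the norm map $\sum_i h^i$ annihilates $W$ under the fixed-point-free coprime action; an element $wh$ with $h$ an involution of $H$ satisfies $(wh)^2 = (1+h)w \in W$ and hence $(wh)^4 = 1$, with $(1+h)w \ne 0$ whenever $w \notin C_W(h)$, which occurs because the transvection $h$ has $[W,h] \ne 0$. Thus $G$ has elements exactly of orders $1,2,3,4,5$, giving $\exp(G) = 60$. The identical argument applied to $L = V \rtimes \slin{2}{4}$ shows $\exp(L) = 60$. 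Finally, $\diag(L) = \{(l,l) : l \in L\}$ is a subgroup of $G$ isomorphic to $L$, of index $|V| = 16$ and hence proper, with $\exp(\diag(L)) = 60 = \exp(G)$; so $G$ is not minimal-exponent.

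I expect the main obstacle to be the invariable-generation step: one must verify both that every relevant conjugate still spans $W$ as an $H$-module — which rests on identifying the full submodule lattice of $V \oplus V$ and on the reduction $w^{x_1} \mapsto w^{h_0}$ — and, for the value of $d(G)$, that the cohomological input $s_L(A) = 1$ holds, this being the quantitative point that pushes $d(G)$ up to $3$.
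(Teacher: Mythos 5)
Your proof is correct and follows essentially the same route as the paper: Theorem~\ref{d-abel} with $t=2$, $r_L(V)=2$, $s_L(V)=1$ (i.e.\ $|H^1(\slin{2}{4},V)|=4$) to get $d(G)=3$; invariable generation by elements of orders $2,3,5$ in which the involution $(v_1,v_2)\in V^2$ has $\F_4$-independent coordinates and hence normally generates $V^2$; and the diagonal copy of $L$ as the proper subgroup of full exponent. Your additional details (the submodule lattice of $V\oplus V$ justifying that the normal closure of $(v_1,v_2)$ is all of $V^2$, and the explicit computation $\exp(G)=\exp(L)=60$) simply make explicit steps that the paper leaves implicit.
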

\begin{proof}
Note first that $|H^1(\slin{2}{4},V)|=4$. Thus we may use
Theorem~\ref{d-abel} with $t = 2$, $\theta = 1$, $r_{L}(V) = 2$ and $s_{L}(V) = 1$ to
see that $d(G)=3$.

Let us now show that $G$ is coprimely invariably generated.
Let $z=(e_1,e_2)\in V\times V$, where $e_1$ and $e_2$ are linearly
independent elements of $V$ and note that, with this assumption on $e_1$ and $e_2$, the normal closure $\langle z   \rangle^G$ 
is the whole of $V^2$. Choose
 $x \in \slin{2}{4}$ of order $3$, and $y \in \slin{2}{4}$ of order $5$.
For any $a,b,c \in G$ and for $H= \langle x^a,y^b,z^c \rangle$, the quotient
$HV^2/V^2 \cong \slin{2}{4}$, hence $\langle z^c \rangle^H=\langle z^c \rangle^G=
\langle z   \rangle^G=V^2$.  Therefore $H=HV^2=G$ and we conclude that $G$ is invariably generated by $x,y,z$.

Finally, the subgroup $\{(l,l) \in L^2\mid l\in L \}$ is a proper subgroup of $G$ with the same exponent.
\end{proof}

\begin{prop}\label{supersoluble} For any $t\in \mathbb N$ there exists a finite supersoluble group
$G$ such that $G$ can be coprimely generated with $d(G) = t$ elements.
\end{prop}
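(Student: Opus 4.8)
The plan is to realise $G$ as a crown-based power of a one-dimensional affine group and then to exhibit an explicit coprime generating set of size $t$. Assume $t \ge 2$ (the case $t=1$ being handled by any group of prime order). Fix distinct primes $r_1, \dots, r_{t-1}$ and, by Dirichlet's theorem on primes in arithmetic progressions, choose a prime $p \equiv 1 \pmod{r_1 \cdots r_{t-1}}$. Let $C_n = \langle c \rangle$ be cyclic of order $n = r_1 \cdots r_{t-1}$, acting on $V = \F_p$ by a scalar $\lambda \in \F_p^{\times}$ of order $n$, and set $L = V \rtimes C_n$. Then $L$ is primitive monolithic with abelian socle $V$, and I would take $G = L_{t-1}$, which is $\F_p^{t-1} \rtimes C_n$ with $C_n$ acting diagonally by $\lambda$ and with $\soc(G) = \F_p^{t-1}$.

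First I would check that $G$ is supersoluble: the diagonal scalar action preserves every subspace of $\F_p^{t-1}$, so a flag of coordinate subspaces together with a cyclic refinement of $C_n$ gives a chief series all of whose factors are cyclic of prime order. Next I would compute $d(G)$ using Theorem~\ref{d-abel}. Here $\End_{C_n}(V) = \F_p$ gives $r_L(V) = 1$; coprimality of $|C_n|$ and $p$ gives $H^1(C_n, V) = 0$, so $s_L(V) = 0$ by Theorem~\ref{H=0}; the module is nontrivial, so $\theta = 1$; and $d(L/V) = d(C_n) = 1$. The formula then yields $d(G) = \max\bigl(1,\, 1 + (t-1)\bigr) = t$.

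The main work is to produce a coprime generating set of size $t$. Writing $c_i = c^{n/r_i}$ (of order $r_i$, acting as the scalar $\mu_i := \lambda^{n/r_i} \neq 1$) and letting $e_1, \dots, e_{t-1}$ be the standard basis of $\F_p^{t-1}$, I would set
\[
g_0 = (e_1, 1), \qquad g_i = (e_{i+1}, c_i)\ \ (1 \le i \le t-2), \qquad g_{t-1} = (0, c_{t-1}).
\]
A direct computation shows that $g_0$ has order $p$ and each $g_i$ has order $r_i$ (the geometric sum $1 + \mu_i + \dots + \mu_i^{r_i-1}$ vanishes), so the $t$ orders $p, r_1, \dots, r_{t-1}$ are pairwise coprime. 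For generation, the images of $g_1, \dots, g_{t-1}$ are $c_1, \dots, c_{t-1}$, which generate $C_n = G/\soc(G)$; it remains to recover the full socle. The element $g_0$ yields $e_1$, and for $1 \le i \le t-2$ the commutator $[g_i, g_{t-1}]$ lies in $\soc(G)$ and equals a nonzero scalar multiple of $e_{i+1}$ (the terms involving the trivial socle part of $g_{t-1}$ drop out, leaving $(1-\mu_{t-1})e_{i+1}$). Hence $\langle g_0, \dots, g_{t-1} \rangle$ contains $e_1, \dots, e_{t-1}$, so it contains $\soc(G) = \F_p^{t-1}$ and surjects onto $C_n$, and therefore equals $G$.

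The step I expect to be most delicate is arranging for the socle to be recovered entirely from commutators and a single order-$p$ element: because $C_n$ acts by scalars, conjugation never moves a socle line off itself, so the $(t-1)$-dimensional socle cannot be swept out by conjugating one vector. The resolution is to distribute $t-1$ independent basis vectors among the generators and to liberate each $e_{i+1}$ through a commutator with the socle-free generator $g_{t-1}$; this is precisely what forces the coprime generating set to have the full size $t = d(G)$, and it exhibits the desired family.
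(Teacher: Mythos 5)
Your proof is correct, and the group you build is essentially the paper's: both take $L=\F_p\rtimes C_n$ with a cyclic group of squarefree order $n$ acting on $\F_p$ by scalars (fixed-point-freely), with $p\equiv 1 \pmod n$ supplied by Dirichlet, and then pass to a crown-based power of $L$; your computation of $d(G)=t$ via Theorem~\ref{d-abel} matches the paper's (up to an index shift: the paper realises $d=t+1$ from $L_t$, you realise $d=t$ from $L_{t-1}$ and treat $t=1$ separately). Where you genuinely diverge is in producing the coprime generating set. The paper is non-constructive: it takes a generating tuple $x_1,\dots,x_{t+1}$ of $C_n$ (padding with the identity), invokes Gasch\"utz's lifting theorem to get elements $w_i$ of the socle with $G=\langle x_1w_1,\dots,x_{t+1}w_{t+1}\rangle$, and then uses fixed-point-freeness ($C_{x_i}(V)=0$) to see that multiplying by $w_i$ does not change the order of $x_i$, so the lifted generators still have pairwise coprime orders. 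You instead write down explicit generators $(e_1,1)$, $(e_{i+1},c_i)$, $(0,c_{t-1})$, check their orders by the vanishing geometric sum, and recover the socle by hand via the commutators $[g_i,g_{t-1}]$, which are nonzero multiples of $e_{i+1}$ because $\mu_{t-1}\neq 1$. Your route costs a page of explicit computation but avoids Gasch\"utz's theorem entirely and makes the generators visible; the paper's route is shorter and works uniformly without having to arrange basis vectors among the generators, but gives no explicit generating set. Both arguments are complete; your supersolubility check (scalar action preserves every coordinate flag) is also correct and is left implicit in the paper.
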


\begin{proof}Let $n=p_1\cdots p_t$ be the product of the first $t$ prime integers and let $p$ be a prime
such that $n$ divides $p-1$ (the prime $p$
exists by Dirichlet's theorem). The cyclic group $C = \cyc{n}$ has a
fixed point free multiplicative action on $V=\F_p$;
set $L$ to be the monolithic group $V \rtimes C$.
Let $G$ be the crown-based power $L_t$, then  $d(G)=t+1$  by Theorem~\ref{d-abel}.

Consider a generating set $\{x_1,\dots,x_{t+1}\}$ of $C$ with $|x_i|=p_i$ if $i\leq t$
and $x_{t+1}=1.$
A well-known theorem of W. Gasch\"utz  \cite{g1} states that
 if $F$ is a free group with $n$ generators, $H$ is a group with $n$ generators, and $N$ is a finite normal subgroup of $H$, then every homomorphism of $F$ onto $H/N$ is induced by a homomorphism of $F$ onto $H$.
It follows that there exist $w_1,\dots,w_{t+1}$ such that
$G=\langle x_1w_1,\dots,x_{t+1}w_{t+1}\rangle.$ Clearly $|x_{t+1}w_{t+1}|=p$; on the other hand
if $i\leq t$, then $C_{x_i}(V)=\{0\}$, and this implies that $|x_iw_i|=|x_i|=p_i.$
\end{proof}

%
%


\section{Proof of Theorem \ref{soluble} }\label{sec:soluble}

\begin{thm}\label{abcase} Let $L=A \rtimes H$ be a primitive monolithic group with abelian socle $A$ and let  $t \in \mathbb{N}$.
 If  $L_t$
 is coprimely invariably generated, then $$t\leq \dim_{\End_H(A)}A.$$
\end{thm}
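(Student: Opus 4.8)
The plan is to exploit the splitting to identify $L_t$ with $M \rtimes H$, where $M = A^t$ carries the diagonal $H$-action; thus $M$ is the direct sum of $t$ copies of the faithful irreducible $\mathbb{F}_p H$-module $A$ (here $p$ is the prime dividing $|A|$, and $E := \End_H(A)$ is a field by Wedderburn's theorem), and writing $r = \dim_E A$ I must show $t \le r$. First I would record that any coprime invariable generating set $\{g_1,\dots,g_u\}$ projects modulo $M$ to a generating set of $H$, and that $M$ is a nontrivial $p$-group. Since the orders $|g_i|$ are pairwise coprime, $p$ divides at most one of them. At least one must be divisible by $p$: a $p'$-element $g$ of $M \rtimes H$ lies in a complement to $M$ in $M\langle g\rangle$, and since complements to the normal Sylow $p$-subgroup $M$ are $M$-conjugate (Schur--Zassenhaus), $g$ is $M$-conjugate into $H$; were every $g_i$ a $p'$-element we could simultaneously conjugate them all into $H$, forcing $\langle\,\cdots\,\rangle \le H \subsetneq G$ and contradicting invariable generation. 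So exactly one generator, say $g_1 = (m_1,h_1)$, has order divisible by $p$, and each remaining $g_i$ is conjugate into $H$, the $g_i$ for $i\ge 2$ generating a subgroup $K \le H$ with $\langle h_1, K\rangle = H$.

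The crux, and the step I expect to be the main obstacle, is to prove that $h_1 = 1$, i.e. that the unique $p$-singular generator lies in the socle $M$. Conjugating $g_1$ by $n \in M$ replaces its $M$-component by an arbitrary element of the coset $m_1 + (h_1-1)M$. Suppose $h_1 \ne 1$; faithfulness of the $H$-action on $A$ forces $h_1$ to act nontrivially on $M$, so $B := (h_1 - 1)M \ne 0$. As $M = A^t$ is semisimple its radical is zero, so some maximal $H$-submodule $U \subsetneq M$ fails to contain $B$, whence $U + B = M$. Then $m_1 \in U + B$, so for a suitable $n$ the conjugate $g_1^{(n,1)}$ has $M$-component in $U$ and therefore lies in the proper subgroup $U \rtimes H$; conjugating $g_2,\dots,g_u$ into $H \le U \rtimes H$ as above exhibits a choice of conjugates that generates a subgroup of $U \rtimes H \subsetneq G$, contradicting invariable generation. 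Hence $h_1 = 1$ and $g_1 = m_1 \in M$.

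Finally, with $h_1 = 1$ the projections give $\langle h_2,\dots,h_u\rangle = H$, so after conjugating $g_2,\dots,g_u$ into $H$ the chosen generators are $m_1$ together with generators of $H$. The $G$-conjugates of $m_1$ are precisely its $H$-translates, so invariable generation forces the $H$-submodule generated by $m_1$ to be all of $M$; that is, $M = A^t$ is a cyclic $\mathbb{F}_p H$-module. A surjection $\mathbb{F}_p H \twoheadrightarrow A^t$ then factors through the head $\mathbb{F}_p H/\mathrm{rad}$, whose $A$-isotypic part is $A^{r}$ by Wedderburn's theorem, giving a surjection $A^{r} \twoheadrightarrow A^t$ and hence $t \le r = \dim_{E} A$, as required.
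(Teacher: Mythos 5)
Your proof is correct, and its skeleton matches the paper's: identify $L_t$ with $A^t \rtimes H$ ($H$ acting diagonally), conjugate every $p'$-generator into the complement $H$ via the Hall/Schur--Zassenhaus argument, deduce that $A^t$ must be a cyclic $\F_p H$-module, and translate cyclicity into the bound $t \le \dim_{\End_H(A)}A$. The genuine difference lies in the step you single out as the crux. The paper never needs to know that the $p$-singular generator lies in the socle: writing $g_1 = vh$ with $v \in A^t$ and $h \in H$, and having arranged $g_2,\dots,g_u \in H$, it observes that $\langle h, g_2,\dots,g_u\rangle = H$ (by projecting modulo $A^t$) and then simply uses the containment
\[
L_t = \langle vh, g_2,\dots,g_u\rangle \le \langle v, h, g_2,\dots,g_u\rangle \le \langle v\rangle^H H,
\]
which forces $\langle v\rangle^H = A^t$, i.e.\ $v$ is a cyclic generator, with no analysis of $h$ at all. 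You instead prove the stronger structural statement that $h=1$, i.e.\ that the unique generator of order divisible by $p$ can be taken inside the socle, via the radical argument: if $h \ne 1$ then $(h-1)A^t \ne 0$ by faithfulness, some maximal submodule $U$ satisfies $U + (h-1)A^t = A^t$, and $M$-conjugation moves $g_1$ into the proper subgroup $U \rtimes H$, contradicting invariable generation. That argument is valid (and it buys a sharper conclusion, consistent with Proposition \ref{A_5}, where the generator of even order is indeed chosen in the socle), but it comes at the cost of an extra module-theoretic argument that the paper's containment trick renders unnecessary. Your closing step --- factoring $\F_p H \twoheadrightarrow A^t$ through the semisimple head and reading off the multiplicity $\dim_{\End_H(A)}A$ of $A$ there --- is an equivalent repackaging of the paper's observation that the $t$ components of a cyclic generator of $A^t$ must be linearly independent over $\End_H(A)$.
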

\begin{proof} Let $A$ be a $p$-group and set $G=L_t$.
Assume that $\{g_1,\dots,g_u\}$ is a  set of pairwise coprime elements that invariably generate $G$
where $g_i$ is a $p'$-element for every $i \neq 1$. Set $V=A^t$.

Note that, if $|g_i|$ is coprime to $p$ and $g_i=vh$ where $v \in V$ and $h \in H$, then $g_i$ is conjugate to an element of $\langle h \rangle$, since  $\langle h \rangle$ is a Hall $p'$-subgroup of $V \langle h \rangle$; in particular  $g_i$ is conjugate to an element of $H$.
 Therefore,
as  $\{g_1, g_2, \dots , g_u\}$ invariably generates $G$, by
taking suitable conjugates of $g_2, \dots , g_u$, we can assume that $g_2, \dots , g_u \in H$.

Consider $g_1=vh$, where $v \in V$ and
 $h \in H$, and set $K=\langle  h, g_2,\dots,g_u \rangle$.
Since $KV=G=HV$ and $K \leq H$, we deduce that $K=H = \langle h, g_2, \ldots, g_u \rangle$.
Therefore,
\[  G= \langle  vh, g_2,\dots,g_u \rangle \leq \langle  v, h, g_2,\dots,g_u \rangle
 \leq \langle v \rangle ^H H \]
 hence $G=\langle v \rangle ^H H $ and $\langle v \rangle ^H=V$,
 that is,
  $v$ is a cyclic generator for the $\F_pH$-module $V=A^t$.
  Let $v=(v_1, \ldots ,v_t)$.
 Switching to additive notation,  the fact that $v$ is a cyclic generator for the $\F_pH$-module $V$ implies   that
 the elements $v_1, v_2, \ldots , v_t$ are linearly independent elements of the  $\End_H(A)$-vector space $A$.
 In particular $t\leq \dim_{\End_H(A)}A$, as required.
\end{proof}

\begin{proof}[Proof of Theorem \ref{soluble}]
	Let $G$ be a soluble, coprimely invariably generated group. Let $L_t$ be a crown based power such that $L_t$ is a homomorphic image of $G$ and $d(G)=d(L_t)> d(L/A)$. Then $L_t$ is coprimely invariably generated and $L$ has abelian socle.
 Let 	 $r_L(A)$ and $s_L(A)$ be as in Theorem~\ref{d-abel}.

   Since $L$ is soluble, we see from Theorem~\ref{H=0} that $s_L(A)=0$.
Moreover  Theorem  \ref{abcase} implies that $t \le r_L(A)$, and thus  	
$\lceil (t+s_L(A)) /{r_L(A)}\rceil =1$.
 As $d(L_t)>d(L/A)$, by Theorem \ref{d-abel} we conclude that
	\[d(L_t)= \theta +\left\lceil\frac{ t+s_L(A) }{r_L(A)}\right\rceil \le 2,\]
as required.
\end{proof}


\section{Proof of Theorems \ref{3} and \ref{zero-rank}}\label{sec:3_zero}

Let $L$ be a finite monolithic group whose socle $A$ is non-abelian and let
 $\pi$ be a set of primes. For every  $l \in L$,
 define $a_l$ to be the number of $A$-conjugacy classes of  $\pi$-elements $L$ which are contained in $lA$.
Then set
\[a_{\pi}= \max \{a_l \mid l \in L \} .\]
 As usual, for an integer $n$, the set of prime divisors of $n$ is denoted $\pi(n)$.

 \begin{thm}\label{t} Let $L$ be a finite monolithic group whose socle $A$ is non-abelian and let $t$ be a positive integer.
 If the set  $\{g_1,\dots,g_u\}$ invariably generates $L_t$,  
 then
$t\leq \prod_i a_{\pi(|g_i|)}.$
\end{thm}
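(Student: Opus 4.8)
The plan is to argue by contraposition via a pigeonhole principle, exhibiting an explicit bad choice of conjugates whenever $t$ is too large. For each coordinate $1 \le j \le t$ let $\rho_j\colon L_t \to L$ be the projection onto the $j$th factor, so that $w \in L_t$ satisfies $\rho_1(w)A = \dots = \rho_t(w)A$. Writing $g_m = (l_{m,1},\dots,l_{m,t})$, the hypothesis that $g_m$ has order with prime set $\pi(|g_m|)$ forces each coordinate $l_{m,j}=\rho_j(g_m)$ to be a $\pi(|g_m|)$-element of $L$ lying in a single coset $C_m:=\rho_j(g_m)A$ independent of $j$. To each coordinate $j$ I attach its \emph{signature}, the tuple $\big(\mathrm{cl}_A(l_{1,j}),\dots,\mathrm{cl}_A(l_{u,j})\big)$ recording the $A$-conjugacy class in $L$ of each $l_{m,j}$.

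The key observation is that conjugation inside $L_t$ adjusts these coordinates independently: for $c\in A$ and fixed $j$, the element $x=(1,\dots,1,c,1,\dots,1)\in A^t\le L_t$ (with $c$ in position $j$) gives $g_m^{x}=(\dots,l_{m,j}^{\,c},\dots)$, changing only the $j$th coordinate and replacing it by an arbitrary $A$-conjugate. For fixed $m$, every $l_{m,j}$ is a $\pi(|g_m|)$-element of the coset $C_m$, so its $A$-conjugacy class is one of at most $a_{\pi(|g_m|)}$ possibilities, by the definition of $a_{\pi(|g_m|)}$. Hence the number of distinct signatures is at most $\prod_{m=1}^u a_{\pi(|g_m|)}$. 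If $t>\prod_m a_{\pi(|g_m|)}$, then two distinct coordinates $i\ne j$ carry the same signature, i.e. $l_{m,i}$ and $l_{m,j}$ are $A$-conjugate in $L$ for every $m$.

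It then remains to convert this coincidence into a generation failure. For each $m$ pick $c_m\in A$ with $l_{m,j}=l_{m,i}^{\,c_m}$ and conjugate $g_m$ by the element of $A^t\le L_t$ that is $c_m^{-1}$ in position $j$ and trivial elsewhere; this makes the $i$th and $j$th coordinates of $g_m^{x_m}$ coincide, both equal to $l_{m,i}$, while leaving the remaining coordinates unchanged. Consequently every $g_m^{x_m}$ lies in the equalizer $D_{ij}=\{w\in L_t : \rho_i(w)=\rho_j(w)\}$, which is a proper subgroup of $L_t$ (it omits any nontrivial element of the $i$th socle factor $A_i$). Thus $\langle g_1^{x_1},\dots,g_u^{x_u}\rangle\le D_{ij}\subsetneq L_t$, contradicting invariable generation, and therefore $t\le\prod_m a_{\pi(|g_m|)}$.

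The argument is essentially combinatorial once the right invariant—the coordinatewise $A$-conjugacy class—is isolated, so the points requiring care are structural rather than computational. I must verify that elements of $A^t\le L_t$ act independently on the separate coordinates, so that a coordinate's $A$-class can be matched without disturbing the others, and that $D_{ij}$ is proper, for which it suffices that the socle is nontrivial, as $A_i$ and $A_j$ are distinct direct factors of $A^t$. The non-abelian socle is the regime in which this bound is needed; the abelian case behaves differently and is handled separately, under the coprimality hypothesis, in Theorem~\ref{abcase}. A more conceptual but heavier alternative would phrase the coincidence of coordinates $i,j$ as the existence of a linking automorphism in $C_{\aut L}(L/A)$, but the explicit equalizer subgroup keeps the bound transparent.
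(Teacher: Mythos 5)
Your proof is correct and is essentially the paper's own argument: both rest on the pigeonhole principle applied to the tuple of $A$-conjugacy classes carried by each coordinate (bounded by $\prod_i a_{\pi(|g_i|)}$ since each coordinate of $g_i$ is a $\pi(|g_i|)$-element of a fixed coset of $A$), followed by conjugation by elements of $A^t \le L_t$ to make two coordinates literally equal, forcing the conjugated generators into the proper equalizer subgroup $\{w \in L_t \mid \rho_r(w)=\rho_s(w)\}$. The only difference is cosmetic — the paper normalizes $A$-conjugate coordinates to be equal before invoking pigeonhole, whereas you apply pigeonhole to the class signatures first and then conjugate.
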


\begin{proof}
Assume that
 $\{g_1,\dots,g_u\}$ invariably generates $L_t$,  and set $\pi(|g_i|)=\pi_i$, for every $i$.
 Note that, by the definition of $L_t$,   $g_i=(x_{i1},\dots,x_{it})$ where $x_{i1}, \dots,x_{it}$ belong to the same coset $l_iA$ for some
 $l_i \in L;$ in particular $x_{i1}, \dots,x_{it}$ are $\pi_i$-elements of $l_iA$.

If there exist $r$ and $s$ such that $x_{is}=x_{ir}^y$
for some $y\in A$, then by replacing $g_i$ by a suitable conjugate we can assume that  $x_{is}=x_{ir}$ (more precisely, we take the conjugate of $g_i$ by the element $\overline{y}=(1,\ldots,y,\ldots,1)\in L_t$, where $y$ is in the $r$-th position).
Let $a=\prod_{i}a_{\pi_i}$. If $t>a$,
 then it follows from the definition of $a_{\pi}$ that there exist $r,s\in\{1,\ldots,t\}$ with $r\ne s$ such that $x_{ir}=x_{is}$ for every $i\in\{1,\dots,u\}$.
But then $\langle g_1,\dots,g_u\rangle \leq \{(l_1,\ldots,l_t)\in L_t \mid l_r=l_s\}$ which is a proper subgroup of $L_t$, a contradiction.
\end{proof}

\begin{lemma}\label{a_l}
Let $L$   be a monolithic primitive group with non-abelian socle $A$ and
let $\pi$ be a set of primes. 
Then \[a_{\pi} \le k_ {\pi}(A) .\]
\end{lemma}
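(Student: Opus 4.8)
The plan is to relate the quantity $a_\pi$, defined in terms of $A$-conjugacy classes of $\pi$-elements inside a fixed coset $lA$, to the number $k_\pi(A)$ of $A$-conjugacy classes of $\pi$-elements lying entirely inside $A$ itself. The key observation is that the maximum defining $a_\pi$ is achieved at some coset $lA$, and the crucial case is when $l\in A$ (i.e.\ the trivial coset $A$), where by definition $a_1=k_\pi(A)$. So the heart of the lemma is to show that no nontrivial coset $lA$ can contain \emph{more} $A$-classes of $\pi$-elements than $A$ does.

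The approach I would take is to fix a coset $lA$ achieving the maximum $a_\pi$, and produce an injection from the set of $A$-conjugacy classes of $\pi$-elements in $lA$ into the set of $A$-conjugacy classes of $\pi$-elements in $A$. First I would note that if $lA$ contains no $\pi$-element at all, then $a_l=0\le k_\pi(A)$ and we are done; so assume $lA$ contains a $\pi$-element. Let $g\in lA$ be a fixed $\pi$-element, and consider an arbitrary $\pi$-element $x\in lA$. Since $x$ and $g$ lie in the same coset of $A$, we have $g^{-1}x\in A$. The natural candidate for the injection is the map sending the $A$-class of $x$ to the $A$-class of some associated element of $A$; the cleanest choice is likely to use the fact that $gA=lA$ has finite order in $L/A$, so some power of $g$ lies in $A$, and to exploit the coprimality structure of $\pi$-elements.

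The main obstacle I anticipate is constructing the injection so that it is genuinely well-defined on $A$-classes (i.e.\ $A$-conjugate $\pi$-elements in $lA$ map to $A$-conjugate $\pi$-elements in $A$) and genuinely injective. A promising device is the following: since $g$ is a $\pi$-element, $\langle g\rangle$ is a $\pi$-group, and $\langle g\rangle A/A$ is a cyclic $\pi$-group in $L/A$; one can then attempt to multiply each $\pi$-element $x\in lA$ by a fixed power of $g^{-1}$ that compensates the $A/A$-part, landing in $A$, and check that the resulting correspondence respects and separates $A$-classes. The subtlety is that $A$ is non-abelian, so conjugation does not interact with this multiplication as transparently as in the abelian case; one must verify carefully that $A$-conjugation by a single element $a$ transports one such representative to another, and that distinct $A$-classes in $lA$ cannot collapse. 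I expect that setting up this correspondence and verifying its injectivity, handling the non-commutativity of $A$ with care, will be where the real work lies, whereas the reduction to a single coset and the degenerate (empty) case are routine.
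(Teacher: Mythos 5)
Your opening reduction is correct ($a_1 = k_\pi(A)$, so the point is that no coset $lA$ can contain more $A$-classes of $\pi$-elements than $A$ itself, and one may assume the maximizing coset contains a $\pi$-element $g$), but the central device of your plan fails before injectivity is even in question. A map of the form $x \mapsto g^{-k}x$ is not well defined on $A$-classes and does not land among $\pi$-elements: a product of $\pi$-elements need not be a $\pi$-element, and translation is never conjugation-equivariant unless the translating element is central, since $g^{-k}(a^{-1}xa)$ is in general not $A$-conjugate to $g^{-k}x$. Both failures are visible already in the toy case $N = \langle (1\,2\,3)\rangle \lhd S_3$ with $\pi = \{2\}$ and $g = (1\,2)$: the coset $gN$ consists of the three transpositions, which form a \emph{single} $N$-class of $\pi$-elements, yet $g^{-1}(1\,2) = 1$ while $g^{-1}(1\,3) = (1\,3\,2)$, which is neither a $\pi$-element nor conjugate to $1$; note also that $gN$ contains \emph{more} $\pi$-elements ($3$) than $N$ does ($1$), so no injection at the level of elements can exist at all, and any argument must genuinely operate on classes. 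This also shows that the difficulty you attribute to non-abelianness of $A$ is a red herring: the example has $N$ abelian, and the obstruction is the non-equivariance of translation, which no ``careful handling'' can remove.

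The paper closes this gap by a different mechanism, with no injection constructed at all. It takes $l$ to be a $\pi$-element with $a_l = a_\pi$, sets $X = \langle l \rangle A$, and observes that for every $x \in lA$ one has $X = AC_X(x)$ (because $\langle x \rangle$ covers the cyclic quotient $X/A$), so that $X$-classes and $A$-classes in $lA$ coincide; it then invokes \cite[Theorem 1.6]{centralizer}, which says that the number of $X$-classes of $\pi$-elements in the generating coset $lA$ equals the number of $X$-invariant $A$-classes of $\pi$-elements of $A$, and the latter is trivially at most $k_\pi(A)$. That counting identity is a character-theoretic result in the spirit of Brauer's permutation lemma; producing the explicit class-level injection your proposal postulates would amount to giving a new bijective proof of it. So the proposal has a genuine gap precisely at its key step, and the specific route suggested for filling it cannot work.
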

\begin{proof}
Let $l$ be a $\pi$-element  of $L$ such that $a_l=a_{\pi}$.
Set $X=\langle l \rangle A$. Let $x \in lA$. Since $X/A= \langle xA \rangle$, we have $X=AC_X(x)$ whence every $X$-conjugacy class  in $lA$ is a single $A$-orbit.  In particular $a_l$ coincides with the number of $X$-conjugacy classes of $\pi$-elements in the coset $lA$.

By \cite[Theorem 1.6]{centralizer},
 $a_l$ is precisely the number of $A$-conjugacy classes of $\pi$-elements in $A$ which are invariant under $X$, whence $ a_l \le k_ {\pi}(A)$.
\end{proof}

\begin{lemma}\label{prod}
Let $L$   be a monolithic primitive group with non-abelian socle $A=S^n$, and
let $\pi_1, \dots ,\pi_u$ be disjoint sets of primes. Then
\[\prod_{i=1}^u a_{\pi_i} \le \frac{|A|}{2n|\out S|} .\]
\end{lemma}
\begin{proof} 
 By Lemma \ref{a_l}, we may bound $a_{\pi_i} \le k_{\pi_i}(A)$ for all $i$.
As $A=S^n$, we get  $k_{\pi_i}(A) = k_{\pi_i}(S)^n$.
Now consider a partition $\tilde \pi_1, \dots, \tilde \pi_u$ of $\pi(|S|)$ with the property that $\tilde \pi_i \supset \pi_i \cap \pi(|S|):$
 clearly $k_{\pi_i}(S) \leq k_{\tilde \pi_i}(S)$.
It follows from  Theorem~\ref{k_pi(S)}  (whose proof is in Section~\ref{sec:k_pi(S)}) that
$$ \prod_{i=1}^u k_{\tilde \pi_i}(S) \leq \frac{|S|}{2|\out S|}.$$
Therefore
\[\prod_{i=1}^u  a_{\pi_i} \le \prod_{i=1}^u  k_{\pi_i}(A)
  = \prod_{i=1}^u  k_{\pi_i}(S)^n
\le \prod_{i=1}^u  k_{\tilde \pi_i}(S)^n
  \leq \frac{|S|^n}{2^n|\out S|^n}
 \le \frac{|A|}{2n|\out S|} \]
as required.
\end{proof}

\begin{lemma}\label{d=2}
Let $L$ be a monolithic primitive group with non-abelian socle $A=S^n$.
 If $L_ t$ is minimally $d$-generated
(i.e. $d(L_{ t}/N )< d(L_ t)=d$ for every $ 1 \neq N \lhd L_{ t}$) and
\[ t \le  \frac{| A|}{2n |\out S|}\]
then $d = 2$ (and $ t=1$).
\end{lemma}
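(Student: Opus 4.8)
The plan is to use the general bound on $d(L_t)$ for non-abelian socle given by Theorem~\ref{f_L} together with the lower bound $P_{L,A}(d)\ge 1/2$ from Theorem~\ref{1/2}, and to show that under the stated hypothesis $t \le |A|/(2n|\out S|)$ the crown-based power $L_t$ can already be generated by two elements. First I would observe that since $L_t$ is minimally $d$-generated, $d(L_t) > d(L/A)$, so in particular $d \ge d(L/A)$; I aim to show $d \le 2$, which combined with the fact that $L_t$ has the non-abelian simple chief factor $S$ forces $d \ge 2$ and hence $d = 2$. Once $d(L_t) = 2$, the minimality hypothesis and the structure of crown-based powers (the chief factor $A$ appears $t$ times as a complemented factor) will force $t = 1$: indeed if $t \ge 2$ then $L_{t-1}$ is a proper quotient $L_t/N$ with $N \ne 1$, and one must check $d(L_{t-1}) = d(L_t)$, contradicting minimal generation.

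The key quantitative step is to combine the two cited inequalities. By Theorem~\ref{f_L}, $d(L_t) \le 2$ holds if and only if
\[ t \le \frac{P_{L,A}(2)\,|A|^2}{|C_{\aut L}(L/A)|}. \]
By Theorem~\ref{1/2} we have $P_{L,A}(2) \ge 1/2$, so it suffices to verify
\[ \frac{|A|}{2n|\out S|} \le \frac{|A|^2}{2\,|C_{\aut L}(L/A)|}, \]
i.e. $|C_{\aut L}(L/A)| \le n|\out S|\,|A|$. The main work is thus to bound the centralizer $C_{\aut L}(L/A)$. For a monolithic group with socle $A = S^n$, one has $\aut L \le \aut A = (\aut S \wr \perm_n)$ acting on the $n$ simple direct factors, and $C_{\aut L}(L/A)$ consists of those automorphisms inducing the trivial outer action on $L/A$; this should be bounded by $|A| \cdot |\out S|^? \cdot n^?$. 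I would appeal to the standard description of $\aut(S^n)$ and the fact that an automorphism centralizing $L/A$ is, modulo inner automorphisms of $A$ (contributing the factor $|A|$), determined by a compatible choice of outer-automorphism data on the factors, giving at most $n|\out S|$ further choices.

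The step I expect to be the main obstacle is precisely the centralizer bound $|C_{\aut L}(L/A)| \le n|\out S|\,|A|$: one must argue carefully about how automorphisms of $L$ that act trivially on the top quotient $L/A$ decompose into an inner part on the socle (accounting for $|A|$) and an outer/permutation part constrained by the requirement of centralizing $L/A$. This requires the precise structure theory of automorphism groups of direct powers of simple groups, and care that the action on $L/A$ (rather than on all of $L$) is what is being centralized. Once this inequality is established, the numerical comparison above is immediate, yielding $d(L_t) \le 2$, and the minimal-generation hypothesis then pins down both $d = 2$ and $t = 1$ as described.
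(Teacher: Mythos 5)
Your overall strategy (a centralizer bound, then Theorem~\ref{f_L} combined with $P_{L,A}\ge 1/2$, then minimality) is the same as the paper's, and your sketch of the bound $|C_{\aut A}(L/A)|\le n|A|\,|\out S|$ matches what the paper does (the paper cites Lemma 1 of Dalla Volta--Lucchini for it rather than reproving it, so flagging it as ``the main obstacle'' overstates its role). However, there is a genuine gap at your key quantitative step: you apply Theorem~\ref{f_L} with $d=2$, asserting that $d(L_t)\le 2$ if and only if $t\le P_{L,A}(2)\,|A|^2/|C_{\aut L}(L/A)|$. Theorem~\ref{f_L} carries the hypothesis $d\ge d(L)$, and at that stage of your argument you have no upper bound on $d(L)$: since $d(L)=\max\{2,d(L/A)\}$, the quantity $d(L)$ can a priori exceed $2$ whenever $d(L/A)>2$, and minimality only gives $d(L/A)<d$, not $d(L/A)\le 2$. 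Indeed, if $d(L/A)>2$ then $P_{L,A}(2)$ is not even defined, because no pair of elements of $L$ projects onto a generating pair of $L/A$. So your appeal to Theorem~\ref{f_L} at $d=2$ assumes, in effect, the very conclusion $d\le 2$ you are trying to prove; as structured, the argument is circular.

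The paper avoids this by applying Theorem~\ref{f_L} at $d_L=d(L)$, where the hypothesis holds trivially: since $d_L\ge 2$ (the socle is non-abelian), one has $|A|^{d_L}\ge |A|^2$, so the assumption $t\le |A|/(2n|\out S|)$ together with the centralizer bound and $P_{L,A}(d_L)\ge 1/2$ yields $t\le P_{L,A}(d_L)|A|^{d_L}/|C_{\aut A}(L/A)|$, whence $d(L_t)\le d_L$, and in fact $d(L_t)=d_L$ because $L$ is a quotient of $L_t$. Minimality then forces $t=1$ (if $t\ge 2$, then $L$ is a proper quotient of $L_t$ needing $d$ generators, a contradiction), so $L$ itself is minimally $d$-generated; only now does the paper invoke the Lucchini--Menegazzo theorem $d(L)=\max\{2,d(L/A)\}$, which together with $d(L)>d(L/A)$ (minimality again) forces $d=d(L)=2$. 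Note that this last ingredient, absent from your proposal, is what actually pins down $d=2$: your plan can be repaired, but only by restructuring it along these lines (run the counting argument at $d(L)$, not at $2$, and finish with Lucchini--Menegazzo), not by applying Theorem~\ref{f_L} directly at $d=2$.
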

\begin{proof}
Set $d_L =d(L)$ and note that $d_L \ge 2$ since $L$ has non-abelian socle. Let  $X$ be the subgroup of $\aut S$ induced by the conjugation action of $N_G(S_1)$ on the first
factor $S_1$ of $A=S_1\times \dots \times S_n$, with $S\cong S_i$ for each $1\leq i \leq n$. As in the proof of Lemma 1 in \cite{dv-l-non-zero-pr},
\[|C_{\aut A}(L/A)| \leq n |S|^{n-1}|C_{\aut S}(X/S)| \]
and therefore
\[|C_{\aut A}(L/A)| \leq n  |S|^{n-1} |\aut S| = n |A| |\out S|.\]
By  Theorem  \ref{1/2}, $P_{L,A}(d_L)\ge 1/2.$
So the assumptions give that
\[ t \le \frac{1}{2} \frac{| A|}{n |\out S|} \le \frac{P_{L,A}(d_L)|A|^2}{n |A||\out S|} \le \frac{P_{L,A}(d_L)|A|^{d_L}}{|C_{\aut A}(L/A)| }.\]
By Theorem \ref{f_L} this implies that $d=d(L_ t)=d_L$.  As $L_ t$ is minimally $d$-generated, it follows that $ t=1$; in particular, $L$ is minimally $d$-generated.  Now, by the main theorem in \cite{meneg}, $d(L)= \max \{ 2, d(L/A) \}$,  and again by minimality, we conclude that $d=d(L)=2$.
\end{proof}

\begin{proof}[Proof of Theorem \ref{3}]
Let $G$ be a coprimely invariably generated group and let $d=d(G)$. As remarked in Section~\ref{sec:background}, there exists a monolithic primitive group $L$ with socle $A$ and an integer $t$, such that $L_t$ is a quotient of $G$ and $d=d(L_t)> d(L/A)$. Moreover $L_t$ is coprimely invariably generated.

 If $A$ is abelian,  then we can apply Theorem \ref{d-abel}: since $ d(L_t)> d(L/A)$ and, by  Theorems  \ref{thm:A-G} and \ref{abcase},   $s_L(A) < r_L(A)$ and $t \le r_L(A)$,  it follows that
 \[d(G) = d(L_t)= \theta +\left\lceil\frac{ t+s_L(A)}{r_L(A)} \right\rceil \le \theta + 2 \le 3.\]

If $A$ is non-abelian and $\{g_1,\dots,g_u\}$ are coprime invariable generators of $L_t$,
 then by Theorem \ref{t},
$t\leq \prod_{i=1}^u a_{\pi(|g_i|)}.$
Then by Lemma \ref{prod}
\[\prod_{i=1}^u a_{\pi(|g_i|)} \le \frac{|A|}{2n|\out S|} .\]
 Thus
 \[ t \le  \frac{|A|}{2n|\out S|} \]
  and by Lemma \ref{d=2} we conclude that $d(G)=d(L_t)=2$.
\end{proof}

\begin{proof}[Proof of Theorem \ref{zero-rank}]
Let $G$ be a coprimely invariably generated group.
Assume, by way of contradiction,
that $pr(G)>0$. Let $L$ and $t \in \N$ be such that $L$ is a monolithic primitive group
with socle $A$ and $L_ t$ is a homomorphic image of $G$,   with $d(L_t)=d(G)=d$ and $d>d(L/N)$.

 If $A$ is abelian, then $pr(G)=0$ by
 Theorem \ref{pr}, a contradiction.
If  $A$ is non-abelian, then arguing
as in the proof of the non-abelian case of Theorem \ref{3},  we conclude that $d=d(L_t)=2.$
 Thus again $pr(G) = 0$. 
\end{proof}

\section{Proof of Theorem~\ref{thm:simpleCIG}}\label{sec:simpleCIG}

In the following proof, by $[a, b]$ we denote the lowest common multiple of integers $a$ and $b$.

\begin{proof}[Proof of Theorem~\ref{thm:simpleCIG}]
We make use of the invariable generators given in \cite{ig}, where it is proved that every finite simple group is invariably generated by two elements. For the classical groups, the orders given in \cite{ig} are for the quasisimple groups, so we must adjust their values to get coprime projective orders.

 For the alternating groups, the generators given in \cite[Proof of Lemma 5.2]{ig} are of coprime orders.

 For the special linear groups in dimension $n \ge 3$, the invariable generators in \cite{ig} have orders $(q^n-1)/((q-1, n)(q-1))$ and $(q^{n-1} - 1)/(q-1)$, which are coprime. The given generators for $\psl{2}{q}$ are also always of coprime order. For the unitary groups and the orthogonal groups other than $\mathrm{P \Omega}_{4k+2}^{-}(q)$ with $q$ odd  and $\om{8}{+}{q}$ with $q \le 3$, the given generators are coprime. For $\mathrm{P \Omega}_{4k+2}^{-}(q)$ it suffices to take the square of the second generator in \cite{ig} to produce coprime invariable generators.

For the symplectic groups in dimension $2m \ge 4$, the given generators have orders $(q^m + 1)/(q-1, 2)$ and $[q^{m-1}+1, q+1]$,  so when $m$ is even as the corresponding elements of the simple group are coprime. When $m$ is odd we choose three elements: one of order $(q^m + 1)/(2, q-1)$, one of order $(q^{m-1} + 1)/2$, and one of order  $(q^m - 1)/2$. These are coprime, and it follows from \cite[Theorem 1.1]{MSW} that these elements  invariably generate $\mathrm{PSp}_{2m}(q)$.

Of the classical groups, this leaves only $\om{8}{+}{2}$ and $\mathrm{P \Omega}_{8}^{+}(3)$. For $\om{8}{+}{2}$,
choose an element $a$ from class \texttt{5A}, an element $b$ from class \texttt{7A} and an element $c$ from class \texttt{9A}. The only maximal subgroup of $\om{8}{+}{2}$ to contain elements of all three of these orders is $\symp{6}{2}$, and $\symp{6}{2}$ has no maximal subgroups that contain elements of all three of these orders, so if $\langle a^x, b^y, c^z \rangle = H \neq \om{8}{+}{2}$ then $H \cong \symp{6}{2}$, and $H$ contains \texttt{5A} elements of $\om{8}{+}{2}$. The outer automorphism group of $\om{8}{+}{2}$ acts on the three classes of $\symp{6}{2}$ in the same way as it acts on the three classes of elements of order $5$. Thus specifying that $H$ contains \texttt{5A} elements tells us which $\om{8}{+}{2}$-conjugacy class of groups $\symp{6}{2}$ we have, and in particular without loss of generality $H = \langle a, b, c \rangle$. Such a group contains no elements from (a fixed) one of classes \texttt{2C} or \texttt{2D}, so we let $d$ be an element from this class. Then $a, b, c$ and $d$ coprimely invariably generate $\om{8}{+}{2}$.

For $G = \mathrm{P \Omega}_{8}^{+}(3)$, we first note that $G$ contains three classes of elements of order $5$,
two of order $13$ (one of which contains powers of the other), and only one of elements of
order $7$. So we let $a \in \texttt{7A}$, $b \in \texttt{13A}$ and $c \in \texttt{5A}$.
Order considerations show that the only possible maximal subgroup to contain
$\langle a^x, b^y, c^z \rangle$ is $\om{7}{}{3}$, and that given $a^x, b^y, c^z$, they
are contained in at most one copy of $\om{7}{}{3}$. There are six classes of
groups $\om{7}{}{3}$ in $G$, with stabiliser $\mathrm{D}_8$, and these classes are cycled in
two 3-cycles by the triality automorphism, which also permutes the three classes of elements of order $5$ in $G$. Thus there are most two $G$-conjugacy classes of
groups $\om{7}{}{3}$ that are generated by $\langle a^x, b^y, c^z\rangle$ as $x, y, z$
vary. Now, $\om{7}{}{3}$ contains four conjugacy elements of order $9$, which form three
orbits under $\Aut \om{7}{}{3}$. Conversely, $\om{8}{+}{3}$ contains $14$ conjugacy classes of elements of order $9$, forming orbits of length $6$, $4$ and $4$. Consider the orbit of length $6$. The two $G$-conjugacy classes of groups $\om{7}{}{3}$ intersect at most four of these classes, so let $d$ be an element of order $9$ in one of the remaining two classes. Then $a, b, c, d$ coprimely invariably generate $G$.


For all of the exceptional groups except $\cE{7}{q}$,
 the invariable generators given in \cite{ig} are coprime.
Thus we need only consider $\cE{7}{q}$. By \cite[Table 6]{GM}, elements of order $(q+1)(q^6 - q^3 + 1)/(2, q-1)$ are contained only in a copy of $\tE{q}_{sc}.\mathrm{D}_{q+1}$. Since the order of $\cE{7}{q}$ is divisible by $q^{14}-1$, we may find an element of order a Zsigmondy prime for $q^{14}-1$ in $\cE{7}{q}$. Such a prime does not divide the order of $\tE{q}$ or $q+1$, so gives a pair of invariable generators for $\cE{7}{q}$.

For the sporadics and the Tits group, \cite[Table 9]{GM} lists carefully chosen conjugacy classes of elements of the sporadics groups, together with a complete list of the maximal subgroups containing those conjugacy classes. It suffices to check that in each case there exists an element
of order coprime to the given one that  lies in none of the listed maximal subgroups.
\end{proof}

\section{Proof of Theorem \ref{k_pi(S)} } \label{sec:k_pi(S)}

In this section we prove Theorem~\ref{k_pi(S)}. First, we need a preliminary lemma.

\begin{lemma}\label{k}
 Assume that $G$ is a finite group and let $\pi\subseteq \pi(G).$
Then $k_ \pi(G)\leq  |G|_\pi$. In particular if  $\pi=\{p\}\cup \tilde \pi$, then
    $k_ \pi(G) \leq k_ p(G) \cdot |G|_{\tilde \pi}.$
\end{lemma}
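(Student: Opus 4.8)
The plan is to prove $k_\pi(G)\le |G|_\pi$ by induction on $|\pi|$, obtaining the refined ``in particular'' bound as the engine of the inductive step. The two ingredients are a Sylow argument handling a single prime and the primary decomposition of elements, which lets me reduce a general $\pi$ to a smaller set $\tilde\pi$ by passing to a centraliser.

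First I would settle the base case $\pi=\{p\}$. Every $p$-element of $G$ generates a $p$-subgroup, which by Sylow's theorem is conjugate into a fixed Sylow $p$-subgroup $P$. Hence each $G$-conjugacy class of $p$-elements meets $P$, and since $|P|=|G|_p$ there are at most $|G|_p$ such classes; that is, $k_p(G)\le |G|_p$.

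Next I would prove the refined statement, which simultaneously drives the induction. Write $\pi=\{p\}\cup\tilde\pi$ with $p\notin\tilde\pi$. Each $\pi$-element $x$ has a unique primary decomposition $x=uv=vu$, where $u$ is its $p$-part and $v$ its $p'$-part; here $v$ is a $\tilde\pi$-element of $C_G(u)$, and conjugation respects the decomposition since $(x^g)_p=(x_p)^g$. I would partition the $\pi$-classes of $G$ according to the $p$-class of their $p$-part, and then, fixing a representative $u$ of a $p$-class, show that the map $x=uv\mapsto v$ is a $C_G(u)$-equivariant bijection between the $\pi$-elements with $p$-part exactly $u$ and the $\tilde\pi$-elements of $C_G(u)$. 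Since two elements sharing the $p$-part $u$ are $G$-conjugate if and only if they are $C_G(u)$-conjugate (any $g$ with $(uv)^g=uv'$ must fix $u$ and hence lie in $C_G(u)$), this gives the identity
\[ k_\pi(G)=\sum_{[u]} k_{\tilde\pi}\bigl(C_G(u)\bigr), \]
the sum running over the $k_p(G)$ conjugacy classes of $p$-elements $u$ of $G$. Applying the inductive hypothesis to $C_G(u)$ with the smaller set $\tilde\pi$ yields $k_{\tilde\pi}(C_G(u))\le |C_G(u)|_{\tilde\pi}\le |G|_{\tilde\pi}$ for each term, whence $k_\pi(G)\le k_p(G)\cdot|G|_{\tilde\pi}$. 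Combining this with the base-case bound $k_p(G)\le |G|_p$ gives $k_\pi(G)\le |G|_p\cdot|G|_{\tilde\pi}=|G|_\pi$, completing both the induction and the main claim.

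I expect the only delicate point to be the centraliser correspondence: verifying that $G$-fusion among $\pi$-elements with a prescribed $p$-part reduces exactly to $C_G(u)$-fusion, and that the resulting $C_G(u)$-orbits are precisely the $\tilde\pi$-classes of $C_G(u)$. The rest is a routine application of the primary decomposition together with Sylow's theorem, and this approach has the advantage of never requiring a Hall $\pi$-subgroup, which need not exist.
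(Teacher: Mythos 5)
Your proof is correct and follows essentially the same route as the paper's: induction on $|\pi|$, the Sylow argument for the base case, the primary decomposition $x=uv$ into commuting $p$- and $\tilde\pi$-parts, and the passage to the centraliser $C_G(u)$ with the inductive bound $k_{\tilde\pi}(C_G(u))\le |C_G(u)|_{\tilde\pi}\le |G|_{\tilde\pi}$. The only difference is cosmetic: where the paper settles for the inequality (noting that $H$-conjugacy of $b_1,b_2$ implies $G$-conjugacy of $ab_1,ab_2$), you prove the exact class-counting identity $k_\pi(G)=\sum_{[u]}k_{\tilde\pi}(C_G(u))$, which is a valid sharpening but not needed for the bound.
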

\begin{proof}
We prove that  $k_ \pi(G)\leq |G|_\pi$ by
 induction on $|\pi|.$
The case $|\pi|=1$ is an immediate consequence of
the Sylow Theorems. Assume $\pi=\{p\}\cup \tilde \pi.$ Let $g$ by a $\pi$-element of $G$;
we may write $g=ab$ where $a$ is a $p$-element and $b$ is a $\tilde \pi$-element and both are powers of $g$.
Up to conjugacy
 we have at most $k_ p(G)$ choices for $a$. For a fixed choice of $a$ we have
to count the number of $b$. Notice that $b\in H=C_G(a).$ Moreover if $b_1$ and $b_2$ are conjugate
in $H$ then $ab_1$ and $ab_2$ are conjugate in $G$. Hence the number of choices of $b$ is bounded
by the number of conjugacy classes of $\tilde \pi$ elements in $H$, and by induction this number
is at most $|H|_{\tilde \pi}\leq |G|_{\tilde \pi}.$ Thus $k_ \pi(G)\leq  |G|_\pi$ as required.

By the same argument, we now have that
 \[k_ \pi(G)  \leq k_ p(G) k_{\tilde \pi}(H) \leq k_ p(G) |H|_{\tilde \pi} \leq  k_ p(G)  |G|_{\tilde \pi}.\]
\end{proof}

We in fact prove a slightly stronger version of Theorem~\ref{k_pi(S)}, which we state now. Let $\mathcal{S} = \{\alt{n} \ : \ n \le 7\} \cup \{\lin{2}{q} \ : \ q \in \{7, 8, 11, 27\}\} \cup \{\lin{3} {4}\}$.

\begin{thm}
Let $S$ be a finite simple group and let $\pi_1, \dots ,\pi_u$ be a partition of $\pi(S)$.  Then
$$\prod_{i=1}^u k_{\pi_i}(S) \leq \frac{|S|}{2|\out S|}.$$
Furthermore, if $S \not\in \mathcal{S}$, then there exists a prime $p$ dividing $|S|$ such that
\[k_p(S) \le \frac{|S|_p}{2|\out S|}.\]
\end{thm}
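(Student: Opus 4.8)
The plan is to prove the two inequalities by reducing both to a uniform statement about conjugacy-class counts, and to handle the finitely many exceptional groups by direct computation. For the first inequality I would argue as follows. By Lemma~\ref{k}, applied repeatedly, one has $k_{\pi_i}(S) \le |S|_{\pi_i}$ for every block $\pi_i$ of the partition. Since the $\pi_i$ partition $\pi(S)$, the product of the $\pi_i$-parts of $|S|$ is exactly $|S|$, so naively $\prod_i k_{\pi_i}(S) \le \prod_i |S|_{\pi_i} = |S|$. This is off by the factor $2|\out S|$, so the crux is to save this factor. The natural way to save it is to choose one distinguished prime $p$ (say the one furnished by the second, stronger statement of the theorem) and to isolate its block: writing $p \in \pi_1$ and $\pi_1 = \{p\} \cup \tilde\pi_1$, Lemma~\ref{k} gives $k_{\pi_1}(S) \le k_p(S)\,|S|_{\tilde\pi_1}$. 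Combining this with $k_{\pi_i}(S) \le |S|_{\pi_i}$ for $i \ge 2$ yields
\[
\prod_{i=1}^u k_{\pi_i}(S) \le k_p(S)\,|S|_{\tilde\pi_1}\prod_{i\ge 2}|S|_{\pi_i} = k_p(S)\cdot\frac{|S|}{|S|_p} = \frac{|S|}{|S|_p}\,k_p(S).
\]
So the first inequality follows at once from the second, stronger inequality $k_p(S) \le |S|_p/(2|\out S|)$, and the whole problem reduces to finding, for each $S \notin \mathcal S$, a single prime $p$ with this property.

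For that second inequality, the strategy is to exhibit a good prime $p$ for each family of simple groups. The cleanest choice is a prime $p$ whose Sylow subgroup is cyclic and large, because then $k_p(S)$ is controlled by the number of conjugacy classes inside a cyclic group modulo fusion: if a Sylow $p$-subgroup $P$ is cyclic of order $|S|_p$, then the number of $S$-classes of nontrivial $p$-elements is at most $(|S|_p - 1)/\ell + 1$, where $\ell$ is the number of fused generators, and in good cases this is substantially smaller than $|S|_p$. Concretely, for the groups of Lie type I would take $p$ to be a Zsigmondy-type primitive prime divisor of $q^e - 1$ associated to a maximal torus, so that its Sylow subgroup is cyclic and its normalizer (a torus normalizer involving the relevant Weyl group element) fuses the classes efficiently; the count $k_p(S)$ then becomes roughly $|S|_p/w$ for $w$ the order of the controlling Weyl element, and one checks $w \ge 2|\out S|$. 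For the alternating groups $\alt n$ with $n \ge 8$, I would take $p$ a prime in the range $n/2 < p \le n$ (which exists by Bertrand); then a $p$-element is a single $p$-cycle, $|S|_p = p$, $\out{\alt n}$ has order $2$, and a short count of the classes of $p$-cycles gives the bound. The sporadic groups with $S \notin \mathcal S$ are handled individually from the \textsc{Atlas}, choosing a large prime dividing $|S|$.

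The main obstacle I anticipate is twofold. First, obtaining a clean uniform statement across all families of Lie type requires care with the arithmetic of $|S|_p$, with the precise fusion controlled by the relevant Weyl group element, and with the order of $\out S$, which for Lie type groups combines diagonal, field, and graph automorphisms and can be as large as a small multiple of $\log q$ or of the rank; one must verify that the fusion saving always dominates $2|\out S|$, and this is tightest precisely for small rank and small $q$, which is exactly why the finite exceptional set $\mathcal S$ appears. Second, one must confirm that the excluded list $\mathcal S$ is both necessary and sufficient: for the members of $\mathcal S$ the single-prime bound genuinely fails (so they must be excluded from the second statement), yet the first inequality must still be checked for them directly, class count by class count. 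I would therefore organize the proof as a family-by-family verification of the second inequality for $S \notin \mathcal S$, deduce the first inequality from it by the reduction above, and close with an explicit finite check of the first inequality for the groups in $\mathcal S$.
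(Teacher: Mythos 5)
Your reduction of the first inequality to the second is exactly the paper's: split off a distinguished prime $p$ via Lemma~\ref{k}, bound every other block by $k_{\pi_i}(S)\le |S|_{\pi_i}$, and conclude $\prod_i k_{\pi_i}(S)\le k_p(S)\,|S|/|S|_p$. The genuine gap is in your proof of the second inequality for the groups of Lie type. You propose a cross-characteristic Zsigmondy prime with cyclic Sylow subgroup, and you defer the key comparison ``fusion factor $w \ge 2|\out S|$'' to a later check. That check fails, and not merely for a finite list of exceptions. Take $S=\mathrm{PSL}_2(q)$ and any prime $r$ different from the defining characteristic: the Sylow $r$-subgroup $R$ is cyclic, lying in a torus with dihedral normaliser, so by Burnside's fusion argument the only fusion on $R$ is inversion, giving $k_r(S)\ge (|S|_r-1)/2+1=(|S|_r+1)/2$. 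Since $|S|_r/(2|\out S|)\le |S|_r/2<(|S|_r+1)/2$, the desired bound fails for \emph{every} cross-characteristic prime and \emph{every} $q$; no choice of torus rescues this, because $w=2$ is the best available. The same phenomenon occurs in higher rank whenever $w<2|\out S|$, e.g.\ $\mathrm{PSL}_n(q)$ with $q\equiv 1 \pmod n$, where the Singer torus gives $w=n$ but $2|\out S|=4n\log_p q$. So for infinitely many groups the defining characteristic is the only prime that can work, and your strategy cannot be patched into a proof.

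That is precisely what the paper does: for all Lie type families it takes $p$ to be the defining characteristic and uses the Liebeck--Pyber bounds on the number of unipotent classes (e.g.\ $k_p(S)\le n\,p(n)+1\le n2^n$ for $\mathrm{PSL}_n(q)$, with $p(n)$ the partition function, and analogous $6^n$-type bounds for the other classical families), which are independent of $q$, against $|S|_p\approx q^{n(n-1)/2}$ and $|\out S|<q^2$; the comparison then holds outside an explicit finite list, checked by hand (Lemmas~\ref{lin_u}, \ref{sp_ort}, \ref{exceps}). A smaller issue of the same flavour affects your alternating-group argument: for $n\in\{8,9,10,11,12\}$ no prime in $(n/2,n]$ satisfies the bound (for $n=10$ the only such prime is $7$, and $k_7(\alt{10})=2>7/4$; for $n=8$ both $5$ and $7$ fail, the latter because $7$-cycles split into two classes), so one must instead use a small prime with a larger Sylow subgroup, as in the paper's Lemma~\ref{A_n} ($p=2,3,5$ for $8\le n\le 13$), and for large $n$ one needs the Bertrand prime constrained to $r<n-2$ to avoid class splitting.
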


\begin{proof}
For groups in $\mathcal{S}$,  this is a direct calculation using their conjugacy classes.
For the remaining groups, the first claim follows from the second and Lemma \ref{k}.
The alternating case is considered in Lemma~\ref{A_n}, below.
The linear and unitary groups and the symplectic and orthogonal groups are dealt with in Lemmas~\ref{lin_u} and \ref{sp_ort}, respectively. The exceptional case is completed in Lemma~\ref{exceps}.  For the sporadics, this is a straightforward exercise, using \cite{ATLAS}.
\end{proof}


\begin{lemma}\label{A_n}
Let $S = \alt{n}$ for some $n \geq 7$. Then
there exists a prime $r$ dividing $|S|$ such that
$S$ has at most one conjugacy class of nontrivial $r$-elements. Furthermore, if $n \ge 8$ then there exists a prime $p$ dividing $|S|$ such that
\[k_p(S) \le \frac{|S|_p}{2|\out S|}.\]
\end{lemma}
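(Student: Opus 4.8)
The plan is to treat the two assertions separately, since they call for primes of opposite character: a large prime for the first, and the prime $2$ for the second.

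For the first assertion I would choose a prime $r$ with $n/2 < r \le n-2$. Any nontrivial $r$-element of $\alt{n}$ then has order exactly $r$ (the Sylow $r$-subgroup has order $r$, since $n < 2r$), so it is a single $r$-cycle together with $n-r \ge 2$ fixed points. These elements form a single conjugacy class of $S_n$, and because their cycle type has the repeated part $1$ it fails the splitting criterion (all parts distinct and of odd length), so the class does not split in $\alt{n}$. Hence $\alt{n}$ has exactly one class of nontrivial $r$-elements. It then remains to produce such a prime: I would argue that the interval $(n/2, n-2]$ contains a prime for every $n \ge 7$ by a Bertrand-type estimate (e.g.\ Nagura's theorem yields a prime in $(n/2, 3n/5) \subseteq (n/2, n-2]$ once $n \ge 50$, using $3n/5 \le n-2$ for $n \ge 5$), together with a direct check for the finitely many remaining values of $n$.

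For the second assertion I would take $p = 2$. Here $|\out \alt{n}| = 2$ for $n \ge 8$, and by Legendre's formula $|\alt{n}|_2 = |S_n|_2/2 = 2^{\,n - s_2(n) - 1}$, where $s_2(n)$ denotes the number of ones in the binary expansion of $n$; thus the target inequality $k_2(\alt{n}) \le |\alt{n}|_2/(2|\out \alt{n}|)$ becomes $k_2(\alt{n}) \le 2^{\,n - s_2(n) - 3}$. The key structural observation is that no class of nontrivial $2$-elements of $S_n$ splits in $\alt{n}$: a $2$-element has all cycle lengths equal to powers of $2$, so its only odd parts are $1$'s, and the splitting criterion (all parts distinct and odd) cannot be met for $n \ge 2$. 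Consequently each $S_n$-class of even $2$-elements is a single $\alt{n}$-class, so $k_2(\alt{n})$ is at most the number $b(n)$ of partitions of $n$ into powers of $2$ (the $S_n$-classes of $2$-elements), and it suffices to prove $b(n) \le 2^{\,n - s_2(n) - 3}$ for $n \ge 8$.

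Finally I would compare $b(n)$ with $|\alt{n}|_2/4$. Since $b(n)$ grows only subexponentially — the recursion $b(2m) = b(2m-1) + b(m)$ and $b(2m+1) = b(2m)$ shows $b(n) = 2^{O((\log n)^2)}$, whereas $2^{\,n - s_2(n) - 3}$ grows like $2^n$ — the inequality holds with ample room for all large $n$, and the finitely many small cases are verified directly. I expect this last comparison to be the main (though essentially routine) obstacle: the asymptotics are transparent, but turning them into a bound valid for \emph{every} $n \ge 8$ requires either an explicit elementary estimate for $b(n)$ combined with a short finite check, or a direct induction on the recursion above.
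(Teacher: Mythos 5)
Your proof of the first claim is essentially the paper's: both produce a prime $r$ with $n/2 < r \le n-2$ (the paper via the strong form of Bertrand's postulate, you via Nagura plus a finite check), and both observe that the nontrivial $r$-elements are then single $r$-cycles forming one non-split class of $\alt{n}$. For the second claim, however, you take a genuinely different route. The paper recycles the prime $r$ from the first part: for $n \ge 14$ one has $r \ge 11$, and since the nontrivial $r$-elements form a single class, $k_r(S) = 2 < 11/4 \le |S|_r/(2|\out S|)$; the cases $8 \le n \le 13$ are then settled by ad hoc small primes ($k_2(\alt{8})=5 \le 2^6/4$, $k_3(\alt{9})=6 \le 3^4/4$, and $k_5(S)=3 \le 5^2/4$ for $10 \le n \le 13$). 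Your uniform choice $p=2$ is also viable: the observation that no class of nontrivial $2$-elements splits is correct, so $k_2(\alt{n}) \le b(n)$, the number of binary partitions of $n$, and the inequality $b(n) \le 2^{\,n-s_2(n)-3}$ (with $s_2(n)$ the binary digit sum, as in your notation) does hold for all $n \ge 8$ --- for instance the crude bound $b(n) \le (n+1)^{\log_2 n + 1}$ disposes of all $n \ge 64$, and the recursion $b(2m)=b(2m-1)+b(m)$, $b(2m+1)=b(2m)$ handles the rest. What the paper's trick buys is that the second claim becomes almost free: $k_r(S)=2$ requires no partition-counting whatsoever. What your choice buys is a single prime that works for every $n \ge 8$, but at the cost of an explicit (if elementary) estimate for $b(n)$; that estimate is precisely the step you left as a sketch, and it is the one piece of your argument that would still need to be written out in full before the proof is complete.
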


\begin{proof}
First let $k = \lfloor n/2 \rfloor$.
Then Bertrand's postulate states that for $k \geq 4$, there exists a prime $r$ such that
$k \le n/2 < r < 2k - 2 \in \{n-2, n-3\}$, so the first claim follows (after verifying that $r = 5$ works when $n = 7$).

As for the second claim, note that $|\out S| = 2$.
For $n = 8$, we use $k_2(S) = 5$ whilst $|S|_2 = 2^6$. For $n = 9$, we use $k_3(S) = 6$ whilst $|S|_3 = 3^4$.
For $n \in \{10, 11, 12, 13\}$ we use $k_{5}(S) = 3$.
We may therefore assume that $n \ge 14$ and $n - 2 > p= r \geq 11$.
Thus $k_p(S) = 2$, whilst $\frac{|S|_p}{2|\out S|} \geq 11/4 > 2$, so the result follows.
\end{proof}

\begin{lemma}\label{lin_u}
Let $S \cong \lin{n}{p^e}, \uni{n}{p^e}$ be simple, and assume that $S \not \in \{\lin{2}{q} \ : \ q \in \{4, 5, 7, 8, 9, 11, 27\}\} \cup \{\lin{3}{4}\}$.
Then
\[k_p(S) \le \frac{|S|_p}{2|\out S|}.\]
\end{lemma}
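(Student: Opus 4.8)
The plan is to identify the $p$-elements of $S$ with unipotent elements and to count unipotent conjugacy classes. Write $q = p^e$ and put $\epsilon = 1$ in the linear case, $\epsilon = -1$ in the unitary case, so that $S$ is the quotient of $\widehat S \in \{\mathrm{SL}_n(q), \mathrm{SU}_n(q)\}$ by its centre $Z$, where $|Z| = d := \gcd(n, q - \epsilon)$ is coprime to $p$. Since every element of $p$-power order in $\mathrm{GL}_n(q)$ or $\mathrm{GU}_n(q)$ is unipotent, and $|Z|$ is prime to $p$, the $p$-elements of $S$ lift to unipotent elements of $\widehat S$. First I would check that no fusion occurs modulo $Z$: if $u$ is unipotent and $z \in Z$ is a scalar $\zeta I$, then $zu$ has all eigenvalues equal to $\zeta$, so $zu$ is unipotent only when $z = 1$. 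Hence distinct unipotent classes of $\widehat S$ remain distinct in $S$, and $k_p(S)$ equals the number of unipotent classes of $\widehat S$.

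Next I would bound that number. The unipotent classes of $\mathrm{GL}_n(q)$ and $\mathrm{GU}_n(q)$ are parametrised by partitions $\lambda = (\lambda_1, \dots, \lambda_m)$ of $n$ (Jordan type), and each such class splits into exactly $\gcd(q - \epsilon, \lambda_1, \dots, \lambda_m)$ classes of $\widehat S$. Because the parts of $\lambda$ sum to $n$, their common divisor divides $n$, so every splitting factor is at most $d$; summing over the $P(n)$ partitions of $n$ gives
$$k_p(S) = \sum_{\lambda \vdash n} \gcd(q - \epsilon, \lambda_1, \dots, \lambda_m) \le P(n)\, d,$$
where $P(n)$ denotes the number of partitions of $n$.

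It then remains to compare this with $|S|_p / (2 |\out S|)$. A direct computation yields $|S|_p = q^{n(n-1)/2}$, and for $n \ge 3$ one has $|\out S| = 2 e\, d$, so the desired inequality reduces to $4\, e\, d^2\, P(n) \le q^{n(n-1)/2}$. I would establish this for all but finitely many $(n, q)$ using $d \le \min(n,\, q + 1)$, $e \le \log_p q$, and a crude estimate such as $P(n) \le 2^{n-1}$, which forces the left-hand side to grow far more slowly than the right-hand side as $n$ or $q$ increases. The case $n = 2$ (where $\uni{2}{q} \cong \lin{2}{q}$, so we may take $S = \lin{2}{q}$, with $|\out S| = e\gcd(2, q-1)$ and $k_p(S) = 1 + \gcd(2, q-1)$) reduces to $2 e d (1 + d) \le q$ and is treated the same way.

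The main obstacle is the book-keeping for small $n$ and small $q$, where the polynomial factor $e\, d^2 P(n)$ is not yet dominated by $q^{n(n-1)/2}$. These finitely many cases have to be verified directly, and they are precisely what generate the excluded list $\{\lin{2}{q} : q \in \{4,5,7,8,9,11,27\}\} \cup \{\lin{3}{4}\}$; for instance $\lin{3}{4}$ has $d = 3$ and $5$ unipotent classes, whereas $|S|_p / (2|\out S|) = 64/24 < 3$. Some care is also needed to state the outer-automorphism orders and the splitting formula correctly and uniformly across the linear and unitary families.
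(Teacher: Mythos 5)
Your argument is correct, and it reaches the lemma by a genuinely different route at the key counting step. The paper does not compute $k_p(S)$ exactly in general: it quotes Liebeck--Pyber's bound $k_p(S)\le np(n)+1$, works out exact values ad hoc for $n=2$ and $n=3$ (split by $q\bmod 3$), and after a crude comparison is left with $(n,q)\in\{(4,2),(4,3),(5,2)\}$, settled by quoting $k_2(\lin{4}{2})=5$, $k_3(\lin{4}{3})=7$, $k_2(\lin{5}{2})=7$. You instead derive the exact formula $k_p(S)=\sum_{\lambda\vdash n}\gcd(q-\epsilon,\lambda_1,\dots,\lambda_m)\le p(n)\gcd(n,q-\epsilon)$ from unipotent class theory: your lifting and no-fusion (eigenvalue) arguments are valid since the centre has order prime to $p$, and the splitting formula for $\mathrm{SL}_n(q)$ and $\mathrm{SU}_n(q)$ is standard (it does deserve a citation, especially in the twisted, unitary case). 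Your formula is sharper than the cited bound and reproduces every value the paper uses (e.g.\ $1+1+3=5$ for $\lin{3}{q}$ with $3\mid q-1$, and $5,7,7$ for the three exceptional pairs), so the small-case verification becomes uniform and table-free; the paper's route buys brevity by citation. One imprecision: the pairs $(n,q)$ failing your crude inequality $4ed^2p(n)\le q^{n(n-1)/2}$ form a strict superset of the excluded list (for instance $\lin{4}{2}$, $\lin{3}{3}$ and $\uni{3}{q}$ for small $q$ fail the crude bound but satisfy the lemma, while $\lin{3}{2}\cong\lin{2}{7}$ fails genuinely and is excluded under the latter name), so the finite check confirms, rather than ``generates,'' the excluded list, and it must be carried out for all of these small cases, not only $\lin{3}{4}$.
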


\begin{proof}
By \cite[Lemma 1.4]{LiebeckPyber}, $k_p(S) \le np(n) + 1$, where $p(n)$ is the partition function of $n$. Since $p(n) =  p(n-1) + p(n-2) - p(n-5)  - p(n-7) + \cdots$, where the sum is over the pentagonal numbers less than $n$ and the sign of the $k$th term is $(-1)^{\lfloor (k-1)/2 \rfloor}$, we may bound $np(n)  + 1 \leq n2^{n}$.

First suppose that $n = 2$, so that $|S|_p = q$. Then without loss of generality $S \cong \lin{2}{p^e}$. Here $k_p(S) = 2$ for $p = 2$, and $3$ for $p$ odd, whilst $|\out S|$ is $e$ for $p = 2$ and $2e$ for $p$ odd. Thus for $p = 2$  we must check that $2^e \geq 2 \cdot 2e$, which holds for all $e \ge 4$. For $p$ odd we require $p^e \geq 12e$, which clearly holds for all $e$ when $p \ge 13$. If $p = 3$ this yields $e \ge 4$, and when $5 \le p \le 11$ this yields $e \ge 2$.

Next suppose that $n = 3$, so that $|S|_p = q^3$. Suppose first that $S \cong \lin{3}{p^e}$. If $q \equiv 1 \bmod 3$ then $k_p(S) = 5$ and $|\out S| = 6e$, so we require $p^{3e} \geq 60e$, which holds for all such $q > 4$.
If $q \equiv 0, 2 \bmod 3$ then $k_p(S) = 3$ and $|\out S| = 2e$, so we require $p^{3e} \geq 12e$, which holds for all $q > 2$ (but recall that $S \not\cong \lin{3}{2} \cong \lin{2}{7}$). Suppose next that $S \cong \uni{3}{p^e}$. In this case, if $q \equiv 2 \bmod 3$ then $k_p(S) = 5$, whilst if $q \equiv 0, 1 \bmod{3}$ then $k_p(S) = 3$. Since $\uni{3}{2}$ is not simple, and $|\out S| = (3, q+1)\cdot 2e$, the result follows by a similar calculation to that for $\lin{3}{q}$.

We now consider the general case. We bound $k_p$ by $np(n) + 1 \le n2^{n}$,
 whilst the order of a Sylow $p$-subgroup of $S$ is $q^{n(n-1)/2}$ and $$|\Out S| \leq 2(q-1) \log_p q < q^2.$$
If $n2^{n} \geq q^{n^2/2 - n/2 - 2}/2$ then $(n, q) \in \{(4, 2), (4, 3), (5, 2)\}$. In fact $k_2(\lin{4}{2}) = 5 < 2^6/4$, whilst $k_3(\lin{4}{3}) = 7 < 3^6/8$ and $k_2(\lin{5}{2}) = 7 < 2^{10}/4$,  so the result follows.
\end{proof}

\begin{lemma}\label{sp_ort}
Let $S$ be a simple symplectic or orthogonal group, of rank $n$ over $\F_{p^e}$.
Then \[k_p(S) \le \frac{|S|_p}{2|\out S|}.\]
\end{lemma}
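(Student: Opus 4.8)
The plan is to follow the strategy of Lemma~\ref{lin_u}, comparing an upper bound for the number $k_p(S)$ of unipotent classes against the order $|S|_p$ of a Sylow $p$-subgroup. Write $q=p^e$. According to type, $S$ is one of $\mathrm{PSp}_{2n}(q)$ (type $C_n$), $\mathrm{P\Omega}_{2n+1}(q)$ with $q$ odd (type $B_n$), $\mathrm{P\Omega}^+_{2n}(q)$ (type $D_n$) or $\mathrm{P\Omega}^-_{2n}(q)$ (type $\,^2\!D_n$). In each case I record the standard data: the number of positive roots gives $|S|_p=q^{n^2}$ in types $B_n,C_n$ and $|S|_p=q^{n(n-1)}$ in types $D_n,\,^2\!D_n$, while $|\out S|\le c_0\,e$ for an absolute constant $c_0$ that accounts for the diagonal, field, graph and (for $D_4$) triality automorphisms. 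Since $e=\log_p q\le\log_2 q$, we have $2|\out S|\le 2c_0\log_2 q$, a quantity that is dwarfed by $|S|_p$ once $n$ or $q$ is moderately large.

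First I would bound $k_p(S)$ using \cite[Lemma 1.4]{LiebeckPyber}, which for a classical group with natural module of dimension $d$ gives a bound on the number of unipotent classes of the shape $B(n)$, a fixed polynomial times a partition count, and hence $B(n)\le 2^{cn}$ for an absolute constant $c$. Substituting, the required inequality $2|\out S|\,k_p(S)\le|S|_p$ reduces to $2c_0(\log_2 q)\,B(n)\le q^{N}$ with $N\in\{n^2,\,n(n-1)\}$. As the right-hand side grows like $q^{\Theta(n^2)}$ whereas the left-hand side grows only subexponentially in $n$ and logarithmically in $q$, this holds for every pair $(n,q)$ outside an explicit finite list; indeed, for fixed small $n$ the quotient $q^{N}/\log_2 q$ tends to infinity with $q$, so only finitely many $q$ can fail.

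It then remains to dispose of the finitely many small-rank, small-$q$ groups by direct computation of $k_p(S)$ from the known unipotent class counts (distinguishing $q$ even from $q$ odd, where the parametrisations by partitions with parity constraints differ) together with the exact value of $|\out S|$. Here I would take care to exclude the exceptional isomorphisms that place a group under the scope of Lemma~\ref{lin_u} instead, namely $\mathrm{P\Omega}_5(q)\cong\mathrm{PSp}_4(q)$, $\mathrm{P\Omega}^+_6(q)\cong\mathrm{PSL}_4(q)$ and $\mathrm{P\Omega}^-_6(q)\cong\mathrm{PSU}_4(q)$, and to remember that $\mathrm{PSp}_4(2)$ is not simple. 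The main obstacle lies in the low-rank groups with enlarged outer automorphism group, where the factor $2|\out S|$ is largest relative to $|S|_p$: principally $\mathrm{P\Omega}^+_8(q)$, where the triality $S_3$ inflates $|\out S|$ so that the crude bound of the second paragraph actually fails at $q=2,3$ and the inequality must be checked against the precise unipotent class count in type $D_4$, and to a lesser extent $\mathrm{PSp}_4(2^e)$ (graph automorphism) and $\mathrm{\Omega}_7(q)$ in small characteristic. Everything else follows from the growth comparison above.
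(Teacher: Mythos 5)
Your proposal is correct and follows essentially the same route as the paper's proof: both bound the number of classes of $p$-elements (unipotent classes) via the Liebeck--Pyber estimates, compare this against $|S|_p/(2|\out S|)$ using the fact that $|S|_p$ grows like $q^{\Theta(n^2)}$ while the class count and $|\out S|$ grow far more slowly, and then settle the residual small-rank, small-$q$ cases (notably $n\le 4$, with $\mathrm{P\Omega}_8^+(q)$ for small $q$ the tightest) by direct computation of the unipotent class numbers. The only cosmetic difference is that the paper organises the finite check explicitly by rank ($n=2,3,4$, with $n\ge 5$ following cleanly from a $6^n$ bound), whereas you leave the exceptional list implicit, but the substance is identical.
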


\begin{proof}
Here $|S|_p \geq q^{n^2 - n}$ and $|\Out S| \leq 2(q-1, 2)^2 \log_p q$, which is less than $q^2$ for all $q$.
By \cite[Lemmas 1.4 and 1.5]{LiebeckPyber}
if $S$ is symplectic then $$k_p(S) \leq p(2n)2^{(2n)^{1/2}} < 6^n$$ (where $p(n)$ is the partition function of $n$),
whilst if $S$ is orthogonal then $$k_p(S) \leq 2(n,2)p(2n+1)2^{(2n+1)^{1/2}} < 6^n.$$

If $n = 2$ then $q  > 2$ and $S$ is symplectic, so that $|S|_p = q^4$ and $|\Out S| = 2(q, 2)\log_p q$, whilst $k_p(S) \le 7$, so the result follows for all $q$. 

If $n = 3$ then $k_p(S) \leq 60, 187$ for $S$ symplectic or orthogonal, respectively,
so the result is immediate for $q \ge 5$, and for the remaining $q$ we check that in fact $k_p(S) \le 16$.

If $n = 4$ then $k_p(S) \le 156$ for $S$ symplectic and $960$ for $S$ orthogonal, so the result is immediate for $q \ge 7$. For $2 \le q \le 5$ we verify that in fact $k_p(S) \le 81$, which completes the proof.

If $n \ge 5$ the result follows immediately from the $6^n$ bounds, for all $q$.
\end{proof}

\begin{lemma}\label{exceps}
Let $S \cong {}^rX_l(p^e)$ be a simple group of exceptional type. Then \[k_p(S) \le \frac{|S|_p}{2|\out S|}.\]
\end{lemma}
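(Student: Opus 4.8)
The goal is to prove that for every simple exceptional group $S \cong {}^rX_l(p^e)$, there is a prime (we will take the defining characteristic $p$) for which $k_p(S) \le |S|_p/(2|\out S|)$. The strategy mirrors the classical-group lemmas: compare an upper bound for $k_p(S)$, the number of conjugacy classes of unipotent elements (together with the identity), against the lower bound $|S|_p/(2|\out S|)$, where $|S|_p = q^{N}$ with $N$ the number of positive roots and $q = p^e$.

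\begin{proof}
The plan is to run a uniform comparison of an upper bound on the number $k_p(S)$ of classes of $p$-elements against the quantity $|S|_p/(2|\out S|)$. Since the nonidentity $p$-elements of $S$ are exactly the unipotent elements, $k_p(S)$ equals one plus the number of unipotent classes, and the number of unipotent classes of a group of exceptional type is a constant $u(X_l)$ depending only on the Lie type, bounded above by a fixed absolute constant (the maximum, occurring for $E_8$, being a small two-digit number); these counts are available in the literature on unipotent classes. Thus I would first record, type by type, the value $k_p(S) \le u(X_l) + 1$, which is a number independent of $q$. I would next write $|S|_p = q^{N}$, where $N$ is the number of positive roots of the root system: $N = 4$ for ${}^2B_2$, $6$ for $G_2$, $9$ for ${}^2G_2$ (and $^3D_4$ has $N=12$), $24$ for $F_4$, and $N=36, 63, 120$ for $E_6, E_7, E_8$ respectively, with the twisted forms ${}^2E_6$ and ${}^2F_4$ handled analogously. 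Finally I would bound $|\out S|$ crudely: the outer automorphism group of an exceptional simple group has order dividing $d \cdot e \cdot g$, where $d$ is the (small) order of the diagonal part, $g \le 3$ accounts for graph automorphisms, and $e = \log_p q$ for field automorphisms, so $|\out S| < c\,\log_p q$ for a small constant $c$.

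With these ingredients in hand, the required inequality reduces to $2\,|\out S|\,(u(X_l)+1) \le q^{N}$. Since $|\out S| < c\log_p q$ grows only logarithmically in $q$ while $q^N$ grows like a fixed positive power of $q$ (with $N \ge 4$ in every case), the left-hand side is dominated by the right for all but the very smallest values of $q$. I would therefore dispose of the generic case by the elementary estimate $q^{N} \ge q^{4} \ge 2\,c\,(u(X_l)+1)\,\log_p q$, valid for all $q$ once $N \ge 4$ and the constants are made explicit, and then treat the finitely many remaining small pairs $(X_l, q)$ individually. For those few groups of minimal order, one can replace the crude unipotent-class bound by the exact number of unipotent classes and the exact value of $|\out S|$, taken from the tables, and verify the inequality by direct computation.

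The main obstacle is not the asymptotics, which are immediate, but ensuring that the constant bounds on $k_p(S)$ and on $|\out S|$ are simultaneously correct across all types, including the Suzuki and Ree families ${}^2B_2(q)$, ${}^2G_2(q)$ and ${}^2F_4(q)$, where $q$ is an odd power of $2$ or $3$ and the root-count $N$ and the automorphism data differ from the untwisted case. For these twisted groups the number of positive roots and the possible graph-field automorphisms must be read off correctly, and the smallest members (for instance ${}^2B_2(8)$, ${}^2G_2(27)$, and the Tits group ${}^2F_4(2)'$, the last of which is already covered among the sporadic-type exceptions) require the exact class counts rather than the uniform bound. Once the finite list of small exceptions is checked against the exact data, the remaining cases follow uniformly from $q^N$ outgrowing $2|\out S|(u(X_l)+1)$, completing the proof.
\end{proof}
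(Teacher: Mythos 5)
Your overall strategy is exactly the paper's: bound $k_p(S)$ by a constant depending only on the Lie type (one plus the number of unipotent classes, using the bounds cited in \cite{LiebeckPyber}), write $|S|_p$ as a fixed power $q^N$ of $q$, bound $|\out S|$ by a small multiple of $\log_p q$, and win because a fixed positive power of $q$ beats a logarithm, with the smallest groups checked exactly. So the route is the same; the problem is that your numerical data for the twisted families is wrong, and the error sits precisely where your generic estimate pivots.

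Concretely, $|{}^2B_2(q)|_p = q^2$ (the Suzuki group has order $q^2(q^2+1)(q-1)$), not $q^4$, and $|{}^2G_2(q)|_p = q^3$ (order $q^3(q^3+1)(q-1)$), not $q^9$: for these groups the exponent of the Sylow $p$-subgroup is \emph{not} the number of positive roots of the ambient untwisted root system. Consequently your key claim that $N \ge 4$ ``in every case'', which is what lets you dispose of the generic case via $q^N \ge q^4 \ge 2c\,(u(X_l)+1)\log_p q$, is false: $N = 2$ for ${}^2B_2$ and $N = 3$ for ${}^2G_2$. The lemma itself survives, as the paper verifies: for ${}^2B_2(q)$ one has $k_p(S) = 4$, $|\out S| = \log_2 q$, and $q^2 \ge 8\log_2 q$ for all $q = 2^{2m+1} \ge 8$ (while ${}^2B_2(2)$ is not simple); for ${}^2G_2(q)$ one has $k_p(S) \le 10$, $|\out S| = \log_3 q$, and $q \ge 27$, so $q^3 \ge 20\log_3 q$ easily. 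But note these are the tightest cases in the entire lemma (e.g.\ $q^2 = 64$ against $2|\out S|k_p(S) = 24$ for ${}^2B_2(8)$), and your argument as written would compare against $q^4 = 4096$ there, so the verification you describe does not actually establish the inequality for these two families. The repair is straightforward: correct the two exponents, drop the uniform claim $N \ge 4$, and run the same polynomial-versus-logarithm comparison family by family with the true exponents, which is exactly what the paper does.
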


\begin{proof}
We use the results cited in \cite[Proof of Lemma 1.5]{LiebeckPyber} to bound $k_p(S)$ for each family. Let $q = p^e$.

If $S \cong \cF{q}, \cE{6}{q}, \tE{q}, \cE{7}{q}, \cE{8}{q}$, then $|S|_p \geq q^{24}$ and $|\Out S| \leq 6 \log_p q < q^3$, whilst $k_p(S) \le 202$ so the result is clear.

If $S \cong \cG{q}$ then $|S|_p = q^6$ and $|\Out S| \le 2 \log_p q < q$, whilst $k_p(S) \le 9$.
If $S \cong \tB{q}$ then $|S|_p = q^2$ and $|\Out S|  = \log_p q$, whilst $k_p(S) = 4$, so the result holds for all $q > 2$, however $\tB{2}$ is not simple.  If $S \cong \tD{4}{q}$ then $|S|_p = q^{12}$ and $|\Out S| = 3 \log_p q < q^2$, whilst $k_p(S)  \le 8$, so the result is clear.
 If $S \cong \tG{q}$ then $q \ge 27$ with $|S|_p = q^3$ and $|\Out S| = \log_p q$, whilst $k_p(S) \le 10$, so the result holds for all $q$. Finally, if $S \cong \tF{q}$ then $|S|_p = q^{12}$ and $|\Out S| = \log_p q$, whilst $k_p(S) < 35$.
\end{proof}


\begin{thebibliography}{10}
\bibliographystyle{amsplain}



\bibitem{A-G} M. Ascbacher \& R. Guralnick. Some applications of the first cohomology group.
 {\em J. Algebra} {\bf 90} (1984) 446--460.

\bibitem{ATLAS}
J.H.~Conway, R.T.~Curtis, S.P.~Norton, R.A.~Parker \& R.A.~Wilson.
\newblock {\em An $\mathbb{ATLAS}$ of Finite Groups.}
\newblock Clarendon Press, Oxford, 1985; reprinted with corrections 2003.
%

\bibitem{pr}
J. Cossey,  K. W. Gruenberg \& L. G. Kov\'acs.
The presentation rank of a direct product of finite groups.
\textit{J. Algebra} {\bf 28} (1974) 597--603.



\bibitem{dv-l-L_t}
{ F. Dalla Volta \& A. Lucchini}.
Finite groups that need more generators than any proper quotient.
{\em J.~Austral.~Math.~Soc.~Ser.~A}~{\bf 64:1} (1998) 82--91.

\bibitem{dv-l-non-zero-pr}
{ F. Dalla Volta \& A. Lucchini.}
The smallest group with non-zero presentation rank. {\em J. Group Theory} {\bf 2:2} (1999) 147-–155.

\bibitem{DV-L-M}
{F. Dalla Volta, A. Lucchini \& F. Morini}.
On the probability of generating a  minimal $d$-generated group.
{\em J. Aust. Math. Soc.} {\bf 71:2} (2001) 177--185.
%


\bibitem{crowns}
 E. Detomi \& A. Lucchini.
  Crowns and factorization of the probabilistic zeta function of a finite group.
  \textit{J. Algebra} {\bf 265} (2003) 651--668.
%


\bibitem{PLN}
E. Detomi \& A.~Lucchini.  {Probabilistic generation of finite groups
  with a unique minimal normal subgroup}. {\em J. London Math.~Soc.} {\bf 87:3} (2013)
 689--706.

\bibitem{dm}
John~D. Dixon \& Brian Mortimer. \emph{Permutation groups}.
Graduate Texts in
Mathematics, vol. 163, Springer-Verlag, New York, 1996.

\bibitem{dix2} J.D. Dixon. Random sets which invariably generate the symmetric group. \textit{Discrete Math.} {\bf 105} No.1-3 (1992) 25--39.

\bibitem{centralizer} J. Fulman \& R. Guralnick.
Bounds on the number and sizes of conjugacy classes in finite
Chevalley groups with applications to derangements.
{\em Trans.~Amer.~Math.~Soc.} {\bf 364:6} (2012) 3023--3070.

\bibitem{g1}
{W. Gasch\"utz}. Zu einem von B.H. und H. Neumann gestellten Problem. {\it Math.  Nachr.}  {\bf {14}} (1955) 249--252.




\bibitem{GM}
R. Guralnick \& G. Malle.
Products of conjugacy classes and fixed point spaces.
\textit{J. Amer.~Math.~Soc.} {\bf 25:1} (2011) 77--121.

\bibitem{ig} W.M. Kantor, A. Lubotzky \&  A. Shalev.
Invariable generation and the Chebotarev invariant of a finite group.
\textit{J. Algebra} {\bf 348} (2011) 302--314.


\bibitem{LiebeckPyber} M.W. Liebeck \& L. Pyber. Upper bounds for the number of conjugacy classes of a finite group. {\em J.~Algebra} {\bf 198} (1997) 538--562.

\bibitem{andrea2}
A. Lucchini. Generating wreath products and their augmentation ideals. \textit{Rend. Sem. Mat. Univ. Padova} {\bf 98} (1997) 67--87.



\bibitem{meneg}
{A. Lucchini \& F. Menegazzo}.
Generators for finite groups with a unique minimal normal subgroup.
{\em  Rend. Sem. Mat. Univ. Padova} {\bf 98} (1997) 173--191.

\bibitem{lms}
A. Lucchini, M. Morigi \& P. Shumyatsky.
Boundedly generated subgroups of finite groups.
\emph{Forum Math.} {\bf 24:4} (2012) 875--887.

\bibitem{MSW}
G. Malle, J. Saxl \& T. Weigel. Generation of classical groups. {\em Geom.~Dedicata} {\bf 49} (194) 85--116.

\bibitem{nina} N.E. Menezes, M. Quick \& C.M. Roney-Dougal. The probability of generating a finite simple group. {\em Israel J. Math.} {\bf 198:1} (2013) 371--392.

\bibitem{roggen} K. W. Roggenkamp, Integral representations and presentations of finite groups,
Lecture notes in Math. 744 (Springer, Berlin 1979)

\bibitem{st} U. Stammbach,
Cohomological characterisations of finite solvable and nilpotent groups,
{\em J. Pure Appl. Algebra} {\bf 11} (1977/78), no. 1--3, 293--301.
\end{thebibliography}
\end{document}